\documentclass[12pt]{article}

\usepackage[T1]{fontenc}

\usepackage{amsmath}
\usepackage{amsthm}
\usepackage{amssymb}

\usepackage{csquotes}
\MakeOuterQuote{"}

\usepackage{xargs}

\usepackage{enumitem}
\usepackage{calc}
\usepackage{comment}

\usepackage{xcolor}
\usepackage{tikz}

\usepackage[small, pagestyles]{titlesec}
\usepackage[small, it]{caption}

\usepackage[square,comma,numbers,sort&compress]{natbib}

\usepackage[margin=1in,letterpaper,portrait]{geometry}

\theoremstyle{plain}

\newtheorem{theorem}{Theorem}[section]
\newtheorem{proposition}[theorem]{Proposition}

\newtheorem*{proposition*}{Proposition}

\newtheorem{corollary}[theorem]{Corollary}

\newtheorem{observation}[theorem]{Observation}

\newtheorem*{theorem*}{Theorem}

\theoremstyle{definition}

\tikzstyle{every path}=[
    thick
]

\tikzstyle{every node}=[
    minimum width=5pt,
    inner sep=1pt,
]

\tikzstyle{hollow}=[
    shape=circle,
    fill=white,
    node contents={},
    outer sep={0pt},
    draw
]

\tikzstyle{solid}=[
    shape=circle,
    fill=black,
    node contents={},
    outer sep={0pt},
    draw
]

\newcommand{\plotperm}[2][v]{%
  \foreach \j [count=\i] in {#2} {%
    \node[draw] (#1\j) at (\i,\j) [solid] {};
  }%
}

\newcommandx*{\labelnode}[3][1=45]{%
	\pgfgettransformentries
		\myxscale\myxslant\myyslant\myyscale\myxshift\myyshift
	\node at ([shift=({#1}:{0.1/\myxscale})]#2)
		[anchor={180+#1}] {#3};%
}

\usepackage[bookmarks]{hyperref}

\hypersetup{
  colorlinks=true,
  linkcolor=black,
  anchorcolor=black,
  citecolor=black,
  urlcolor=black,
  pdfpagemode=UseThumbs,
  pdftitle={Cyclomatic numbers and permutations},
  pdfsubject={Combinatorics},
  pdfauthor={Bridget Eileen Tenner and Vincent Vatter},
}

\title{Cyclomatic numbers and permutations}

\author{
  Bridget Eileen Tenner%
  \footnote{
    Department of Mathematical Sciences,
    DePaul University,
    Chicago, Illinois.
    \texttt{bridget@math.depaul.edu}\\
    Research partially supported by NSF Grant DMS-2054436 and by a University Research Council Competitive Research Leave from DePaul University.
  }
  \quad
  and
  \quad
  Vincent Vatter%
  \footnote{
    Department of Mathematics,
    University of Florida,
    Gainesville, Florida.
    \texttt{vatter@ufl.edu}
  }
}

\date{}

\begin{document}
\maketitle


\begin{abstract}
We show that several apparently different aspects of a permutation are all tied to a single quantity, the cyclomatic number of its inversion graph. Every reduced word for the permutation orders the edges of the inversion graph one at a time, with the edges from first-occurrence letters forming a spanning forest and the edges from repeated letters accounting for the rest; the number of repeated letters is therefore the cyclomatic number. The excess of permutation cycles over sum components is also at most this quantity, and it follows that the gap between the Coxeter and reflection lengths is at least the cyclomatic number and at most twice it. When the inversion graph is a forest, these results unify classical characterizations of the boolean permutations due to Edelman, to Tenner, and to Petersen and Tenner. We also give a new proof that every connected acyclic inversion graph is a caterpillar.
\end{abstract}

Several apparently disparate results about permutations, spread across the literature on Coxeter groups, reduced words, permutation patterns, and cycle structure, admit a common explanation: they are all governed by the inversion graph. This paper develops that unifying viewpoint.

Our quantitative results are the following. For a permutation $\pi$ with inversion graph~$G_\pi$, the cyclomatic number $\beta(G_\pi)$ (the dimension of the cycle space of~$G_\pi$) equals the number of repeated letters in any reduced word for~$\pi$. The cycle surplus of~$\pi$ (the excess of permutation cycles over sum components) is at most $\beta(G_\pi)$. Together, these two results sandwich the difference between the Coxeter and reflection lengths:
\[
    \beta(G_\pi) \le \ell(\pi) - \ell'(\pi) \le 2\beta(G_\pi).
\]
The mechanism behind these statements is that every reduced word for~$\pi$ orders the edges of~$G_\pi$ one at a time, with the edges from the first occurrence of each letter forming a spanning forest and the remaining edges coming from the repeated letters.

As a special case of these quantitative statements, the forest case $\beta(G_\pi) = 0$ recovers several classical characterizations of the permutations in $\mathrm{Av}(321, 3412)$, each previously established by different methods. Edelman~\cite{edelman:on-inversions-a:} described these permutations as the direct sums of \emph{unimodal cycles of intervals}: permutations of an interval that can be written as a single cycle whose entries, after cyclic rotation, strictly increase to a maximum and then strictly decrease. Tenner~\cite{tenner:pattern-avoidan:} characterized them as the permutations avoiding the patterns $321$ and $3412$, and equivalently as those whose reduced words contain no repeated letter. Petersen and Tenner~\cite{petersen:the-depth-of-a-:} characterized them as the permutations whose Coxeter and reflection lengths agree. All three descriptions name the same set of permutations, historically called \emph{boolean permutations} because their principal order ideals in the Bruhat order form boolean lattices.

The equivalence between $\beta(G_\pi) = 0$ and $\pi \in \mathrm{Av}(321, 3412)$ follows from a folklore fact: no inversion graph has an induced cycle of length five or more. Since $321$ is the unique permutation whose inversion graph is $C_3$ and $3412$ is the unique permutation whose inversion graph is $C_4$, the graph~$G_\pi$ is a forest if and only if~$\pi$ avoids both patterns. Edelman's description requires more work, and we prove it in Section~\ref{sec:edelman}. Briefly, when~$G_\pi$ is connected and acyclic, every reduced word for~$\pi$ uses each simple reflection exactly once (since $|\mathrm{supp}(\pi)| = n - 1$), and we show that this forces~$\pi$ to be a unimodal cycle of~$[n]$. Taking direct sums recovers Edelman's characterization.

The connected acyclic case also yields a structural refinement: every tree that arises as an inversion graph is a caterpillar. This has been known for some time, and it could also be deduced from Gallai's forbidden induced subgraph characterization of comparability graphs~\cite{gallai:a-translation-o:, gallai:transitiv-orien:}. Brandst\"adt and Kratsch~\cite{brandstadt:on-the-restrict:} obtained it as a byproduct of their characterization of bipartite permutation graphs, Acan and Hitczenko~\cite{acan:on-random-trees:} proved it by induction on the number of vertices while locating the ends of the spine, and it appears again in Gervacio, Rapanut, and Ramos~\cite{gervacio:characterizatio:} and in Brualdi and Dahl~\cite{brualdi:permutation-gra:}. What our proof contributes is a direct derivation from the unimodal-cycle structure of the corresponding permutation.

\begin{theorem*}
For a permutation~$\pi$, the following properties are equivalent:
\begin{enumerate}[label=\textup{(\roman*)}, align=left, leftmargin=*]
    \item $\beta(G_\pi) = 0$, equivalently,~$G_\pi$ is a forest;
    \item no reduced word for~$\pi$ contains a repeated letter;
    \item $\ell(\pi) = \ell'(\pi)$;
    \item $\pi \in \mathrm{Av}(321, 3412)$;
    \item $\pi$ can be expressed as a direct sum of unimodal cycles of intervals;
    \item the Schubert variety $X_\pi$ is toric.
\end{enumerate}
\end{theorem*}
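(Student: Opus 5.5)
The plan is to organize the equivalences around condition~(i), using the quantitative results announced in the introduction, which the paper proves later and which we take as available. First, (i)$\Leftrightarrow$(ii). We use the fact that every reduced word for~$\pi$ orders the edges of~$G_\pi$ one at a time, with the first occurrences of letters giving a spanning forest and the repeated letters giving the remaining edges. Consequently the number of repeated letters in any reduced word equals $\beta(G_\pi)$. In particular this number is the same for every reduced word, so "no reduced word has a repeated letter'' and "some reduced word has no repeated letter'' both say exactly that $\beta(G_\pi)=0$.

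Second, (i)$\Leftrightarrow$(iii) follows directly from the sandwich
\[
\beta(G_\pi) \le \ell(\pi)-\ell'(\pi) \le 2\beta(G_\pi),
\]
since the middle term vanishes if and only if $\beta(G_\pi)$ does. Third, (i)$\Leftrightarrow$(iv) uses the folklore fact that no inversion graph has an induced cycle of length at least five. Hence $G_\pi$ contains a cycle if and only if it contains an induced $C_3$ or $C_4$. Any induced subgraph of $G_\pi$ on a vertex set $S$ is the inversion graph of the pattern of~$\pi$ formed by the entries in~$S$. Since $321$ and $3412$ are the unique permutations with inversion graphs $C_3$ and $C_4$, the graph $G_\pi$ has a cycle exactly when $\pi$ contains $321$ or $3412$.

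Fourth, (i)$\Leftrightarrow$(v), which I expect to be the main obstacle. The reverse direction is straightforward. A direct sum has inversion graph equal to the disjoint union of the summands' graphs. For a unimodal cycle of $[m]$, we check that it can be written as a product of $s_1,\dots,s_{m-1}$, each used exactly once, in an order dictated by where each value sits relative to the peak. Then $\ell = m-1$ while $G$ is connected on $m$ vertices, so $G$ is a tree.

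For the forward direction, split $\pi$ into its sum components; each component has a connected inversion graph on its interval, and this graph is a tree. Its support is all $n-1$ simple reflections, while $\ell(\pi)=|E(G_\pi)|=n-1$. So every reduced word is a Coxeter element, using each $s_i$ exactly once. The key step is then to show that any product of $s_1,\dots,s_{n-1}$ in some order is a single $n$-cycle that is unimodal. I would prove this by induction on $n$: removing $s_{n-1}$ from the word, and commuting it to an end if possible, shows that $\pi$ is obtained from a Coxeter element of $S_{n-1}$ by inserting $n$ into its cycle adjacent to $n-1$. Since $n-1$ is the maximum of a unimodal cycle, this insertion places $n$ at the new peak and preserves unimodality.

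Finally, (iv)$\Leftrightarrow$(vi) is not combinatorial in nature, and I would cite the known result of Karuppuchamy that $X_\pi$ is toric exactly when $\pi$ avoids $321$ and $3412$, rather than reprove it.
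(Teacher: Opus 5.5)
Your proposal is correct and essentially follows the paper's proof. Each condition is tied to (i) through the repeated-letters theorem, the sandwich corollary, the absence of long induced cycles, and the Coxeter-element classification of unimodal cycles. Your induction strips the largest generator $s_{n-1}$ where the paper strips the smallest $s_a$, and the commutation to an end is always possible because $s_{n-2}$ occurs only once.

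The one real difference is (vi). The paper obtains it from the complexity formula $c_T(X_\pi)=\ell(\pi)-|\mathrm{supp}(\pi)|=\beta(G_\pi)$. You cite Karuppuchamy instead, whose criterion is stated as $\pi$ being a product of distinct simple reflections, i.e.\ condition (ii). So his result reaches (iv) only via the (ii)$\Leftrightarrow$(iv) equivalence, which your argument already establishes.
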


\begin{proof}
The equivalence of (i) and (ii) is Theorem~\ref{thm:repeated-letters-intro}, since $\beta(G_\pi) = 0$ if and only if every reduced word has zero repeated letters. The equivalence of (i) and (iii) is Corollary~\ref{cor:sandwich-intro}. The equivalence of (i) and (iv) is the folklore induced-cycle fact above, for which we provide a short proof as Proposition~\ref{prop:321:3412:forest}. The equivalence of (i) and (v) is the direct-sum form of the tree-versus-unimodal-cycle correspondence proved in Section~\ref{sec:edelman}. Finally, the equivalence of (i) and (vi) is Observation~\ref{obs:t-complexity-beta}, since $X_\pi$ is toric if and only if $c_T(X_\pi) = 0$.
\end{proof}

The cycle-space condition $\beta(G_\pi) = 0$ is at the center of all of these equivalences: each of the others is established by showing it equivalent to (i), and each equivalence is the forest case of one of the motivating quantitative results of this work.


\subsection*{Organization}

In Section~\ref{sec:introduction}, we introduce the permutation-centric objects and properties that we unify in this work. The length of that section reflects the range of results it brings together. Section~\ref{sec:foundations} collects the foundational facts about inversion graphs that we draw on throughout the article: the identification of sum components with connected components of~$G_\pi$, the placement of permutation cycles within sum components, and the absence of long induced cycles in~$G_\pi$. Section~\ref{sec:repeated-letters} develops the reduced-word viewpoint, proves the spanning-forest result that underlies the repeated-letters identity, and discusses when the pattern bound on repeated letters is sharp. Section~\ref{sec:gap-and-forests} proves the surplus bound and combines it with the gap decomposition to sandwich the length gap. Section~\ref{sec:edelman} establishes the correspondence between trees and unimodal cycles, recovers Edelman's description by taking direct sums, and shows that every connected acyclic inversion graph is a caterpillar. We conclude with some perspective on this work, in Section~\ref{sec:conclusion}.

%
%

\section{Permutation context}\label{sec:introduction}


\subsection{Inversion graphs and permutation patterns}

The \emph{inversion graph}~$G_\pi$ of a permutation~$\pi \in S_n$, also known in the literature as the \emph{permutation graph} of~$\pi$, has vertex set $[n] = \{1, 2, \ldots, n\}$, with an edge $ab$ whenever the values~$a$ and $b$ form an inversion of~$\pi$, meaning that the larger value appears to the left of the smaller in one-line notation\footnote{Throughout this paper, permutations written without parentheses are in one-line notation, while permutations written with parentheses and commas are in cycle notation. We compose permutations from right to left, so a simple reflection $s_i = (i, i+1)$ applied on the right of~$\pi$ swaps the entries in positions $i$ and $i+1$ of the one-line notation of~$\pi$, while $s_i$ applied on the left swaps the values $i$ and $i+1$.}. Thus we label the vertices of inversion graphs by \emph{value}, while some other works label them by \emph{position}; the choice is purely cosmetic, but the reader should bear this convention in mind.

The vertex labels exist for convenience of reference (so we can say, for instance, that two specific values \emph{form an inversion}), but inversion graphs themselves are unlabeled in the graph-theoretic sense: we identify two inversion graphs that differ only by a relabeling of their vertices. At times we work more closely with the identification of vertices as values, as in ``the vertex~$1$'' or ``the values of a cycle of~$\pi$''; which sense is intended will always be clear from context. Likewise, in figures we draw the vertex $\pi(i)$ at coordinates $(i, \pi(i))$, so that two vertices are adjacent precisely when one lies southeast of the other; this embedding is merely a convenience for diagrams, and inversion graphs themselves do not come equipped with such an embedding in the plane.

There is another natural way to view~$G_\pi$. A permutation $\pi \in S_n$ defines a poset on~$[n]$, realized by the two linear orders $1 < 2 < \cdots < n$ and $\pi(1) \prec \pi(2) \prec \cdots \prec \pi(n)$, and the inversion graph~$G_\pi$ is the incomparability graph of this poset. This poset has order dimension at most~$2$, and exactly~$2$ unless~$\pi$ is the identity, in which case it is a chain. Dushnik and Miller~\cite{dushnik:partially-order:}, who introduced the dimension of a poset, proved that a poset has dimension at most~$2$ precisely when its incomparability graph is a comparability graph. The corresponding statement for graphs, that a graph arises as an inversion graph if and only if both it and its complement are comparability graphs, is due to Pnueli, Lempel, and Even~\cite{pnueli:transitive-orie:}. Gallai~\cite{gallai:a-translation-o:, gallai:transitiv-orien:} gave a forbidden induced subgraph characterization of comparability graphs.
Inversion graphs are also perfect graphs, and as such they are covered in some depth in Golumbic's text \emph{Algorithmic Graph Theory and Perfect Graphs}~\cite[Chapter~7]{golumbic:algorithmic-gra:}.

The \emph{standardization} of a sequence of $k$ distinct numbers is the permutation of $[k]$ obtained by replacing the smallest entry with $1$, the second smallest with $2$, and so on. Standardization preserves relative order, and hence preserves inversions. A permutation~$\pi$ \emph{contains} a permutation $\sigma$ as a \emph{pattern} if some subsequence of the one-line notation of~$\pi$ standardizes to $\sigma$, and~$\pi$ \emph{avoids} $\sigma$ otherwise. We write $\mathrm{Av}(B)$ for the set of permutations avoiding every pattern in the set $B$. This set is a \emph{permutation class} because it is closed downward under the containment order; we refer to Vatter's survey~\cite{vatter:permutation-cla:} for more background on permutation classes.

By definition, $|E(G_\pi)| = \mathrm{inv}(\pi)$, the number of inversions of~$\pi$. Let $\kappa(G_\pi)$ denote the number of connected components of~$G_\pi$. The \emph{cyclomatic number} of~$G_\pi$, denoted by $\beta(G_\pi)$, is the dimension of the cycle space of~$G_\pi$, equivalently the minimum number of edges that must be removed to obtain a forest, and hence
\[
	\beta(G_\pi) = |E(G_\pi)| - |V(G_\pi)| + \kappa(G_\pi).
\]


\subsection{Reduced words and inversion graphs}
\label{sec:reduced-words}

Multiplication by a simple reflection on either side changes the inversion count by~$1$; in particular, right multiplication on~$\pi$ by~$s_i = (i, i+1)$ swaps the entries in positions~$i$ and~${i+1}$, and creates or destroys the inversion between those two entries. Thus any product expression for~$\pi$ requires at least $\mathrm{inv}(\pi)$ simple reflections, and repeatedly destroying inversions in adjacent positions shows that some expression achieves this bound. We therefore define the \emph{Coxeter length}~$\ell(\pi)$ to be the minimum number of simple reflections in such an expression, and $\ell(\pi) = \mathrm{inv}(\pi) = |E(G_\pi)|$. A \emph{reduced word} for~$\pi$ is an expression of~$\pi$ as a product of $\ell(\pi)$ simple reflections, and we write reduced words as sequences of subscripts: the word $w = i_1 i_2 \cdots i_m$ represents the product $s_{i_1} s_{i_2} \cdots s_{i_m}$, which we build up by right multiplication, one letter at a time.

The \emph{cycle space} of a graph $G$ is the $\mathbb{F}_2$-span of its (graph) cycles, viewing each cycle as a set of edges and using symmetric difference for addition. Its dimension is the cyclomatic number $\beta(G)$: if $G$ has $n$ vertices, then any spanning forest of $G$ has $n - \kappa(G)$ edges, and each of the remaining $\beta(G)$ edges, when added back to the forest, creates a unique cycle. These cycles form a basis for the cycle space.

A reduced word $w = i_1 i_2 \cdots i_m$ for~$\pi$ orders the edges of~$G_\pi$ one at a time. Set $\pi_t = s_{i_1} s_{i_2} \cdots s_{i_t}$, so $\pi_0 = e$ is the identity and $\pi_m = \pi$. At step $t$, the simple reflection $s_{i_t}$ swaps the entries in positions~$i_t$ and~${i_t + 1}$ in the one-line notation of~$\pi_{t-1}$. Because $w$ is reduced, this swap increases the Coxeter length by one, so it creates precisely one new inversion, namely the inversion between the two values in positions $i_t$ and $i_t + 1$; the corresponding edge of~$G_\pi$ joins those two values. Writing $f_t$ for this edge, the $m$ letters of $w$, taken in order, correspond bijectively to the edges $f_1, f_2, \ldots, f_m$ of~$G_\pi$.

For example, take $\pi = 321$, whose inversion graph $G_\pi = K_3$ has the edges $12$, $13$, and $23$. The word $w = 121$ is reduced for~$\pi$, since
\[
    e = 123 \xrightarrow{s_1} 213 \xrightarrow{s_2} 231 \xrightarrow{s_1} 321,
\]
and the edges created in succession are $f_1 = 12$, $f_2 = 13$, and $f_3 = 23$. The letter $1$ appears twice in $w$, so $w$ has one repeated letter, and the cyclomatic number is $\beta(G_{321}) = 3 - 3 + 1 = 1$.

For a more significant example, take $\pi = 243179586$, whose inversion graph is shown on the left in Figure~\ref{fig:spanning-forests-example}. With $|E(G_\pi)| = 10$ and $\kappa(G_\pi) = 2$, we have $\beta(G_\pi) = 3$. The reduced word $w = 1323658768$ has three repeated letters at steps $4, 9, 10$, producing the spanning forest $F(w)$ shown in the middle of Figure~\ref{fig:spanning-forests-example} as solid edges, with the three repeated-letter edges dotted. A different reduced word, $w' = 6781237256$, has its three repeated letters at steps $7, 8, 10$ and produces a different spanning forest, shown on the right.

\begin{figure}[htbp]
\begin{center}
\begin{footnotesize}
	\begin{tikzpicture}[scale=0.42,baseline=(current bounding box.center)]
	\draw[darkgray,thick,rounded corners=0.01,line cap=round] (0,0) rectangle (10,10);
	\plotperm{2,4,3,1,7,9,5,8,6}
	\draw (v2)--(v1);
	\draw (v3)--(v1);
	\draw (v4)--(v3);
	\draw [out=-90, in=135] (v4) to (v1);
	\draw (v7)--(v5);
	\draw (v7)--(v6);
	\draw (v8)--(v6);
	\draw (v9)--(v8);
	\draw (v9)--(v5);
	\draw (v9)--(v6);
	\labelnode[-45]{v1}{$1$}
	\labelnode[135]{v2}{$2$}
	\labelnode[45]{v3}{$3$}
	\labelnode[135]{v4}{$4$}
	\labelnode[-45]{v5}{$5$}
	\labelnode[-45]{v6}{$6$}
	\labelnode[135]{v7}{$7$}
	\labelnode[45]{v8}{$8$}
	\labelnode[135]{v9}{$9$}
	\end{tikzpicture}
\qquad
	\begin{tikzpicture}[scale=0.42,baseline=(current bounding box.center)]
	\draw[darkgray,thick,rounded corners=0.01,line cap=round] (0,0) rectangle (10,10);
	\plotperm{2,4,3,1,7,9,5,8,6}
	\draw [ultra thick] (v2)--(v1);
	\draw [dotted] (v3)--(v1);
	\draw [ultra thick] (v4)--(v3);
	\draw [ultra thick, out=-90, in=135] (v4) to (v1);
	\draw [ultra thick] (v7)--(v5);
	\draw [ultra thick] (v7)--(v6);
	\draw [dotted] (v8)--(v6);
	\draw [ultra thick] (v9)--(v8);
	\draw [dotted] (v9)--(v5);
	\draw [ultra thick] (v9)--(v6);
	\labelnode[-45]{v1}{$1$}
	\labelnode[135]{v2}{$2$}
	\labelnode[45]{v3}{$3$}
	\labelnode[135]{v4}{$4$}
	\labelnode[-45]{v5}{$5$}
	\labelnode[-45]{v6}{$6$}
	\labelnode[135]{v7}{$7$}
	\labelnode[45]{v8}{$8$}
	\labelnode[135]{v9}{$9$}
	\end{tikzpicture}
\qquad
	\begin{tikzpicture}[scale=0.42,baseline=(current bounding box.center)]
	\draw[darkgray,thick,rounded corners=0.01,line cap=round] (0,0) rectangle (10,10);
	\plotperm{2,4,3,1,7,9,5,8,6}
	\draw [ultra thick] (v2)--(v1);
	\draw [ultra thick] (v3)--(v1);
	\draw [dotted] (v4)--(v3);
	\draw [ultra thick, out=-90, in=135] (v4) to (v1);
	\draw [ultra thick] (v7)--(v5);
	\draw [ultra thick] (v7)--(v6);
	\draw [ultra thick] (v8)--(v6);
	\draw [dotted] (v9)--(v8);
	\draw [dotted] (v9)--(v5);
	\draw [ultra thick] (v9)--(v6);
	\labelnode[-45]{v1}{$1$}
	\labelnode[135]{v2}{$2$}
	\labelnode[45]{v3}{$3$}
	\labelnode[135]{v4}{$4$}
	\labelnode[-45]{v5}{$5$}
	\labelnode[-45]{v6}{$6$}
	\labelnode[135]{v7}{$7$}
	\labelnode[45]{v8}{$8$}
	\labelnode[135]{v9}{$9$}
	\end{tikzpicture}
\end{footnotesize}
\end{center}
\caption{The inversion graph~$G_\pi$ of $\pi = 243179586$ (left), and two of its spanning forests obtained from reduced words for~$\pi$. The middle forest is $F(w)$ for the reduced word $w = 1323658768$, and the right forest is $F(w')$ for $w' = 6781237256$; in each, the solid edges are the first-occurrence edges and the dotted edges are contributed by repeated letters.}
\label{fig:spanning-forests-example}
\end{figure}

Before the letter $i$ has appeared for the first time (reading left to right), the values in positions $[1, i]$ of the current permutation are precisely $[1, i]$ as nothing has yet crossed that boundary. Then multiplying by the transposition $s_i$ creates an edge joining a value in $[1, i]$ to a value in $[i+1, n]$. We show in Section~\ref{sec:repeated-letters} that the first-occurrence edges form a spanning forest of~$G_\pi$, and so the remaining edges, contributed by repeated letters, account for the $\beta(G_\pi)$ edges that lie outside this spanning forest.

Let $\mathrm{supp}(\pi)$ denote the \emph{support} of~$\pi$, the set of simple reflections appearing in any reduced word for~$\pi$. This set is independent of the choice of reduced word, since by Matsumoto's theorem~\cite{matsumoto:generateurs-et-:} any two reduced words are connected by braid and commutation relations, both of which preserve the set of generators used. Then $|\mathrm{supp}(\pi)|$ is the number of distinct letters in any reduced word, and the number of repeated letters in $w$, meaning the number of occurrences past the first occurrence of each distinct letter, is $\ell(\pi) - |\mathrm{supp}(\pi)|$. The spanning-forest description identifies this count with the cyclomatic number.

\begin{theorem}
\label{thm:repeated-letters-intro}
Every reduced word for a permutation~$\pi$ contains exactly $\beta(G_\pi)$ repeated letters.
\end{theorem}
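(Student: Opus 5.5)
Since the number of repeated letters in any reduced word $w$ for~$\pi$ is $\ell(\pi) - |\mathrm{supp}(\pi)| = |E(G_\pi)| - |\mathrm{supp}(\pi)|$, and $\beta(G_\pi) = |E(G_\pi)| - n + \kappa(G_\pi)$, the statement is equivalent to the identity
\[
    |\mathrm{supp}(\pi)| = n - \kappa(G_\pi).
\]
I will prove this by showing that the first-occurrence edges $F(w)$ form a spanning forest of~$G_\pi$. A spanning forest has exactly $n - \kappa(G_\pi)$ edges, and $|F(w)| = |\mathrm{supp}(\pi)|$ because each distinct letter contributes one first-occurrence edge, so this suffices.

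\textbf{Acyclicity.} I will maintain the invariant that, at each time~$t$, the components of the graph formed by the first-occurrence edges among $f_1,\dots,f_t$ are exactly the sets of values in the maximal intervals of positions not separated by any letter used so far. Call position boundaries $i \mid i+1$ \emph{open} once $i$ has occurred. If $I_1,\dots,I_r$ are the maximal blocks of positions between unopened boundaries, then the values occupying each block are fixed as a set, namely the block itself, since nothing has crossed an unopened boundary. A repeated letter $i_t$ swaps two entries inside a single block, so it changes neither the blocks nor the forest. A first occurrence of~$i$ merges the two adjacent blocks $[a,i]$ and $[i+1,b]$. The new edge joins a value in $[a,i]$ to a value in $[i+1,b]$, and by the invariant these lie in different forest components, namely the components with those vertex sets. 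Adding the edge therefore joins two distinct trees into one tree whose vertex set is the merged block, and the invariant persists. In particular, no first-occurrence edge ever closes a cycle, so $F(w)$ is a forest, and its components are the value sets of the final blocks.

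\textbf{Spanning.} It remains to show that the final blocks are exactly the connected components of~$G_\pi$, so that $F(w)$ spans each component. Every edge $f_t$ of~$G_\pi$ joins two values lying in a common block at time~$t$, and blocks only merge over time. Hence every edge of~$G_\pi$ lies within a final block, so each component of~$G_\pi$ is contained in a final block. Conversely, each final block is connected in~$G_\pi$, because $F(w)$ restricted to it is a tree with edges in~$G_\pi$. Thus the final blocks are exactly the components, and $|F(w)| = n - \kappa(G_\pi)$.

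This argument does not need the sum-component facts from Section~\ref{sec:foundations}; it also reproves that connected components of~$G_\pi$ are intervals. The main obstacle is stating the invariant precisely, in particular checking that a repeated letter cannot connect two different blocks. This holds because a repeated letter~$i$ acts on positions $i$ and $i+1$, which lie in the same block once boundary~$i$ is open.
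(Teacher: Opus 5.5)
Your proposal is correct and follows the paper's proof. The paper deduces the theorem from Proposition~\ref{prop:spanning-forest}, which uses exactly your two invariants: the first-occurrence forest has the blocks of the interval partition (cut at unused letters) as its components, and each block's positions hold exactly its own values. Your spanning step also relies, implicitly, on Proposition~\ref{prop:reduced-word-adds-edge}, which says the edges $f_t$ are distinct and exhaust $E(G_\pi)$; the paper cites this explicitly.
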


The numerical identity in Theorem~\ref{thm:repeated-letters-intro} also follows by combining $\ell(\pi)=|E(G_\pi)|$ with the standard identity $|\mathrm{supp}(\pi)|=n-\kappa(G_\pi)$. What the theorem contributes is the structural mechanism behind this equality: every reduced word orders the edges of $G_\pi$ so that first occurrences form a spanning forest, while repeated letters are precisely the edges outside that forest.

Tenner~\cite{tenner:pattern-avoidan:} showed that this repetition count is zero precisely for the permutations avoiding $321$ and $3412$, and Daly~\cite{daly:reduced-decompo:} characterized the permutations whose reduced words contain exactly one repeated letter. Tenner~\cite{tenner:repetition-in-r:} later proved a general upper bound: the number of repeated letters in any reduced word for~$\pi$ is at most the total number of copies of $321$ and $3412$ in~$\pi$. Theorem~\ref{thm:repeated-letters-intro} replaces this bound with the exact value $\beta(G_\pi)$. Tenner's bound then follows from two graph-theoretic facts: induced cycles span the cycle space, so their number is at least $\beta(G_\pi)$, and inversion graphs have no induced cycles of length five or more (folklore, presented here as Proposition~\ref{prop:no-long-induced-cycles}), so every induced cycle is a triangle or a $4$-cycle, corresponding to a copy of $321$ or $3412$ in~$\pi$.

\begin{corollary}[cf.~Tenner~{\cite[Theorem~3.2]{tenner:repetition-in-r:}}]
\label{cor:pattern-bound-intro}
The number of repeated letters in any reduced word for~$\pi$ is at most the total number of copies of $321$ and $3412$ in~$\pi$.
\end{corollary}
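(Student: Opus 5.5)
The plan is to combine Theorem~\ref{thm:repeated-letters-intro} with a purely graph-theoretic bound. By that theorem, every reduced word for~$\pi$ has exactly $\beta(G_\pi)$ repeated letters. It therefore suffices to show that $\beta(G_\pi)$ is at most the number of copies of $321$ and $3412$ in~$\pi$.

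I would do this in three steps.

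\textbf{Step 1: induced cycles span the cycle space.} For any graph $G$, the induced cycles of $G$ span its cycle space over $\mathbb{F}_2$. Every element of the cycle space is an edge-disjoint union of cycles, so it is enough to show that each cycle $C$ is a sum of induced cycles. I argue by induction on $|C|$. If $C$ has no chord, it is induced. If $C$ has a chord $e$, then $e$ splits $C$ into two shorter cycles $C_1$ and $C_2$ with $C = C_1 + C_2$ (the chord cancels), and induction applies to each. Consequently some subset of the induced cycles is a basis, and the number of induced cycles is at least $\beta(G_\pi)$.

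\textbf{Step 2: every induced cycle is a triangle or a $4$-cycle.} This is Proposition~\ref{prop:no-long-induced-cycles}: inversion graphs have no induced cycles of length five or more.

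\textbf{Step 3: match short induced cycles to pattern copies.} Let $A \subseteq [n]$ be a set of $3$ or $4$ values whose induced subgraph in $G_\pi$ is a triangle or an induced $4$-cycle. The subsequence of $\pi$ formed by these values standardizes to a permutation whose inversion graph is that induced subgraph, since standardization preserves inversions. As noted in the introduction, $321$ is the unique permutation with inversion graph $C_3$, and $3412$ is the unique permutation with inversion graph $C_4$. So each such vertex set gives a copy of $321$ or $3412$. Distinct induced cycles have distinct vertex sets, because an induced cycle is determined by its vertex set. Hence this map from induced cycles to pattern copies is injective, and the number of copies is at least the number of induced cycles, which by Step~1 is at least $\beta(G_\pi)$.

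The only step needing care is the uniqueness claim for $C_3$ and $C_4$, which can be checked directly on $S_3$ and $S_4$. The chord-splitting induction in Step~1 is routine. The substantive input is Proposition~\ref{prop:no-long-induced-cycles}, which is assumed.
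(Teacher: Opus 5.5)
Your proposal is correct and follows the paper's proof step for step. The paper also shows that chordless cycles span the cycle space by splitting along a chord, uses Proposition~\ref{prop:no-long-induced-cycles} to limit induced cycles to lengths $3$ and $4$, matches these with copies of $321$ and $3412$ via the proof of Proposition~\ref{prop:321:3412:forest}, and concludes with Theorem~\ref{thm:repeated-letters}. The only difference is cosmetic: you justify the matching by the uniqueness of $321$ and $3412$ as the permutations with inversion graphs $C_3$ and $C_4$, and you need only injectivity, whereas the paper reads the (bijective) correspondence directly off the arrangement of the entries.
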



\subsection{Permutation cycles and the length gap}

The cycle space of~$G_\pi$ is also closely tied to the cycle structure of~$\pi$, via the sum components of~$\pi$ (the indecomposable summands in its direct sum decomposition; see Section~\ref{sec:foundations} for a formal definition). Writing $\kappa(\pi)$ for the number of sum components of~$\pi$, the first fact we need, established in Proposition~\ref{prop:sum-components}, is that the sum components of~$\pi$ correspond to the connected components of~$G_\pi$, so $\kappa(\pi) = \kappa(G_\pi)$.

Every cycle of~$\pi$ is contained in a single sum component (Proposition~\ref{prop:cycles-in-components}). Let $c(\pi)$ denote the number of cycles of~$\pi$ (including fixed points). Thus, $c(\pi) \ge \kappa(\pi)$. We call the nonnegative difference $c(\pi) - \kappa(\pi)$ the \emph{cycle surplus} of~$\pi$, since it counts the excess of permutation cycles over sum components.

For example, the permutation $321$ has cycle structure $(1, 3)(2)$, so $c(321) = 2$, and its inversion graph $G_{321} = K_3$ is connected, so $\kappa(321) = 1$ and the cycle surplus is $1$ and $\beta(G_{321}) = 1$ as computed above, so the cycle surplus and cyclomatic number agree here.

Our larger example $\pi = 243179586 = (1, 2, 4)(3)(5, 7)(6, 9)(8)$ from Figure~\ref{fig:spanning-forests-example} has $c(\pi) = 5$, and $G_\pi$ has $\kappa(\pi) = 2$ connected components, giving a cycle surplus of $3$, which again equals $\beta(G_\pi) = 3$.

These two quantities need not agree in general. Take $\pi = 4321 = (1, 4)(2, 3)$, so $c(4321) = 2$, and the inversion graph $G_{4321} = K_4$ is connected, so $\kappa(4321) = 1$ and the cycle surplus is again $1$. But $|E(G_{4321})| = 6$, so $\beta(G_{4321}) = 6 - 4 + 1 = 3$, strictly larger than the cycle surplus.

\begin{theorem}
\label{thm:surplus-bound-intro}
For every permutation~$\pi$,
\[
    c(\pi) - \kappa(\pi) \le \beta(G_\pi).
\]
\end{theorem}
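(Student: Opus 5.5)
We reduce to the connected case and then compare reflection length with Coxeter length. By Proposition~\ref{prop:sum-components} and Proposition~\ref{prop:cycles-in-components}, the sum components of~$\pi$ correspond to the connected components of~$G_\pi$, and every cycle of~$\pi$ lies inside one sum component. Both sides of the inequality are additive over sum components: $c$ and $\kappa$ add, and so does $\beta$, because $|E|$, $|V|$ and $\kappa$ all add over connected components. It therefore suffices to treat an indecomposable $\pi\in S_n$, where $\kappa(\pi)=\kappa(G_\pi)=1$. In that case we must show
\[
  c(\pi)-1\le |E(G_\pi)|-n+1 = \mathrm{inv}(\pi)-(n-1).
\]

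The natural tool is the classical formula for reflection length, $\ell'(\pi)=n-c(\pi)$. This holds because each transposition changes the number of cycles by exactly one. With it, the target inequality becomes
\[
  \mathrm{inv}(\pi)-\ell'(\pi) \;\ge\; \mathrm{inv}(\pi)-(n-1) - \bigl(c(\pi)-1\bigr) + \ldots,
\]
which is not quite the right shape. So instead I would argue directly by induction along a reduced word, tracking the two quantities $c(\pi_t)-\kappa(\pi_t)$ and $\beta(G_{\pi_t})$ as $t$ increases.

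Going from $\pi_{t-1}$ to $\pi_t=\pi_{t-1}s_{i_t}$ multiplies by a transposition, so $c$ changes by exactly $\pm1$. On the other side, Theorem~\ref{thm:repeated-letters-intro}, or more precisely the spanning-forest mechanism behind it, describes how the graph changes: each step adds one edge $f_t$ to $G_{\pi_{t-1}}$, so $G_{\pi_t}=G_{\pi_{t-1}}+f_t$. This edge falls into one of two cases.
\begin{itemize}
\item[] \emph{First-occurrence letter.} The edge joins two different components, so $\kappa$ drops by one and $\beta$ is unchanged. I claim $c$ also drops by one here. Before $i_t$ first appears, positions $[1,i_t]$ hold exactly the values $[1,i_t]$, so the two swapped entries lie in different sum components of $\pi_{t-1}$. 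By Proposition~\ref{prop:cycles-in-components}, they then lie in different cycles, and right multiplication by $s_{i_t}$ merges those two cycles. Hence $c-\kappa$ is unchanged.
\item[] \emph{Repeated letter.} The edge lies inside a component, so $\kappa$ is unchanged and $\beta$ rises by one. Since $c$ changes by $\pm1$, the surplus $c-\kappa$ rises by at most one.
\end{itemize}

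Starting from the identity, where both quantities are $0$, the surplus therefore never exceeds $\beta$, which proves the theorem. This argument makes the connected-case reduction unnecessary, though that reduction remains a useful sanity check.

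The main obstacle is the first-occurrence case. There one must verify that the swapped entries lie in distinct cycles of~$\pi_{t-1}$, which requires that the vertices $[1,i_t]$ and $[i_t+1,n]$ lie in different components of $G_{\pi_{t-1}}$. This holds because no value has yet crossed the boundary between positions $i_t$ and $i_t+1$, so $\pi_{t-1}$ is a direct sum across that boundary. One also needs to check that merging two components in the graph matches merging two sum components of the permutation; Proposition~\ref{prop:sum-components}, applied to $\pi_t$, gives exactly this.
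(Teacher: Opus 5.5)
Your proof is correct, and it takes a genuinely different route from the paper's.

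The paper argues statically. Proposition~\ref{prop:cross-edges-even} uses multiplicativity of the sign to show that two disjoint $\pi$-invariant sets are joined by an even number of edges of $G_\pi$. Contracting each cycle support within a component then gives a connected multigraph on $r$ vertices whose edge multiplicities are all even, hence at least $2(r-1)$ edges and $\beta \ge r-1$.

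You argue dynamically, showing that $\beta(G_{\pi_t}) - \bigl(c(\pi_t)-\kappa(\pi_t)\bigr)$ starts at $0$ and never decreases along a reduced word. The only case split you actually need is whether $f_t$ joins two components of $G_{\pi_{t-1}}$. If it does, the swapped values lie in different sum components and hence, by Proposition~\ref{prop:cycles-in-components}, in different cycles, so $c$ and $\kappa$ both drop by one. If not, $\beta$ rises by one and $c-\kappa$ changes by $\pm1$. The spanning-forest picture merely identifies these two cases with first occurrences and repeated letters.

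What your route buys:
\begin{itemize}
\item The surplus bound becomes a consequence of the same edge-by-edge mechanism as Theorem~\ref{thm:repeated-letters-intro}, with no sign or contraction argument.
\item It gives a refinement for free. If $p$ of the repeated letters split a cycle and $q$ merge two, then $p+q=\beta(G_\pi)$ and $c(\pi)-\kappa(\pi)=p-q$, so $\ell(\pi)-\ell'(\pi)=2p$.
\item That identity directly yields the upper half of the sandwich, the evenness of the gap, and the fact that $p$ does not depend on the chosen reduced word.
\end{itemize}

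What the paper's route buys in exchange: it needs no reduced words at all, and it produces the parity lemma, a structural fact about how cycle supports sit inside $G_\pi$.

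The abandoned display in your second paragraph (the one ending in $+\ldots$) should simply be deleted.
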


The proof, in Section~\ref{sec:gap-and-forests}, rests on a parity argument: between any two disjoint~$\pi$-invariant subsets, the number of edges of~$G_\pi$ joining them is even. In particular, the supports of any two cycles of~$\pi$ are joined by an even number of edges, and contracting each support to a single vertex turns this evenness into the bound on the cyclomatic number.

The \emph{reflection length} $\ell'(\pi)$ of a permutation $\pi \in S_n$ is the minimum number of reflections, meaning arbitrary transpositions $(i, j)$ rather than only the simple transpositions, needed to express~$\pi$. Decomposing~$\pi$ into its $c(\pi)$ disjoint cycles and expressing each $m$-cycle as a product of $m - 1$ transpositions gives an expression for~$\pi$ using $n - c(\pi)$ transpositions, so $\ell'(\pi) \le n - c(\pi)$. The reverse inequality holds because multiplying by a transposition changes the cycle count by $\pm 1$: starting from the identity, which has~$n$ cycles, any expression of~$\pi$ as a product of transpositions must use at least $n - c(\pi)$ transpositions to reach~$c(\pi)$ cycles. Combining $\ell(\pi) = |E(G_\pi)|$ with $\ell'(\pi) = n - c(\pi)$, and adding and subtracting $\kappa(\pi)$, gives the following ``gap decomposition''.

\begin{theorem}
\label{thm:gap-decomposition-intro}
For every permutation~$\pi$,
\[
    \ell(\pi) - \ell'(\pi) = \beta(G_\pi) + \bigl(c(\pi) - \kappa(\pi)\bigr).
\]
\end{theorem}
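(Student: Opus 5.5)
The identity is pure bookkeeping once the two length formulas are in hand, and the paper has already set up both. I will combine them and then insert $\kappa(\pi)$ in the right place.

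First, recall from Section~\ref{sec:reduced-words} that $\ell(\pi) = \mathrm{inv}(\pi) = |E(G_\pi)|$. Each right multiplication by a simple reflection changes the inversion count by exactly one, so $\ell(\pi) \ge \mathrm{inv}(\pi)$. Repeatedly undoing an adjacent inversion gives an expression of that length.

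Second, I establish $\ell'(\pi) = n - c(\pi)$, which has two halves.
\begin{itemize}
\item \emph{Upper bound.} Write each $m$-cycle $(a_1, a_2, \ldots, a_m)$ as a product of $m-1$ transpositions, for instance $(a_1,a_m)\cdots(a_1,a_3)(a_1,a_2)$. Summing over the $c(\pi)$ disjoint cycles gives $\ell'(\pi) \le n - c(\pi)$.
\item \emph{Lower bound.} Multiplying any permutation by a transposition $(i,j)$ either merges the two cycles containing $i$ and $j$ or splits their common cycle. In both cases the cycle count changes by exactly $\pm 1$. Starting from the identity, which has $n$ cycles, one therefore needs at least $n - c(\pi)$ transpositions to reach $\pi$.
\end{itemize}
This cut-or-join fact for transpositions is the only step needing any real justification, and it is standard.

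Finally, I subtract and insert $\kappa(\pi)$:
\[
\ell(\pi) - \ell'(\pi) = |E(G_\pi)| - n + c(\pi) = \bigl(|E(G_\pi)| - |V(G_\pi)| + \kappa(G_\pi)\bigr) + \bigl(c(\pi) - \kappa(G_\pi)\bigr).
\]
Here I used $|V(G_\pi)| = n$. The first bracket is $\beta(G_\pi)$ by the formula for the cyclomatic number. For the second bracket, I invoke Proposition~\ref{prop:sum-components}, which gives $\kappa(G_\pi) = \kappa(\pi)$, so it equals the cycle surplus $c(\pi) - \kappa(\pi)$.

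There is no genuine obstacle. The only external input is the identification $\kappa(G_\pi) = \kappa(\pi)$, and it is needed only so that the second term reads as the cycle surplus. Without it the identity still holds verbatim with $\kappa(G_\pi)$ in place of $\kappa(\pi)$.
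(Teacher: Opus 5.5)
Your proposal is correct and matches the paper's proof. Both combine $\ell(\pi) = |E(G_\pi)|$ with $\ell'(\pi) = n - c(\pi)$, add and subtract $\kappa$, and use $\kappa(\pi) = \kappa(G_\pi)$ from Proposition~\ref{prop:sum-components} to identify the two brackets as $\beta(G_\pi)$ and the cycle surplus.
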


\begin{proof}
Let $\pi \in S_n$. We have
\begin{align*}
    \ell(\pi) - \ell'(\pi)
    &= |E(G_\pi)| - n + c(\pi) \\
    &= \bigl(|E(G_\pi)| - n + \kappa(\pi)\bigr) + \bigl(c(\pi) - \kappa(\pi)\bigr) \\
    &= \beta(G_\pi) + \bigl(c(\pi) - \kappa(\pi)\bigr). \qedhere
\end{align*}
\end{proof}

Both summands in the gap decomposition of Theorem~\ref{thm:gap-decomposition-intro} are nonnegative, and by Theorem~\ref{thm:surplus-bound-intro} the second is at most the first. Together these sandwich the length gap.

\begin{corollary}
\label{cor:sandwich-intro}
For every permutation~$\pi$,
\[
    \beta(G_\pi) \le \ell(\pi) - \ell'(\pi) \le 2\beta(G_\pi).
\]
\end{corollary}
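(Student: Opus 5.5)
This corollary follows at once from the gap decomposition of Theorem~\ref{thm:gap-decomposition-intro} combined with the surplus bound of Theorem~\ref{thm:surplus-bound-intro}; no new combinatorics is needed. First I would write
\[
    \ell(\pi) - \ell'(\pi) = \beta(G_\pi) + \bigl(c(\pi) - \kappa(\pi)\bigr),
\]
and then bound the second summand from both sides.

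For the lower bound, I need $c(\pi) - \kappa(\pi) \ge 0$. By Proposition~\ref{prop:cycles-in-components}, every cycle of~$\pi$ lies inside a single sum component. Every sum component is nonempty, so it contains at least one cycle (possibly a fixed point). Hence the map sending each cycle to its sum component is a surjection onto the $\kappa(\pi)$ components, and $c(\pi) \ge \kappa(\pi)$. Dropping this nonnegative term gives $\ell(\pi) - \ell'(\pi) \ge \beta(G_\pi)$.

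For the upper bound, Theorem~\ref{thm:surplus-bound-intro} gives $c(\pi) - \kappa(\pi) \le \beta(G_\pi)$. Substituting this into the decomposition yields $\ell(\pi) - \ell'(\pi) \le 2\beta(G_\pi)$.

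None of these steps is an obstacle in itself: all the substantive content lies in the parity argument behind Theorem~\ref{thm:surplus-bound-intro}, which the corollary assumes. The only point to state carefully is the surjectivity argument for nonnegativity of the surplus, namely that sum components partition $[n]$ into nonempty $\pi$-invariant blocks, each of which is a union of cycles.
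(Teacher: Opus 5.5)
Your proposal is correct and follows essentially the same route as the paper: combine the gap decomposition of Theorem~\ref{thm:gap-decomposition-intro} with nonnegativity of the cycle surplus (from Proposition~\ref{prop:cycles-in-components}) for the lower bound, and with Theorem~\ref{thm:surplus-bound-intro} for the upper bound. Your surjectivity argument for $c(\pi) \ge \kappa(\pi)$ just spells out the paper's remark following Proposition~\ref{prop:cycles-in-components}.
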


In particular, $\ell(\pi) = \ell'(\pi)$ if and only if~$G_\pi$ is a forest.

Both lengths count transpositions in an expression for~$\pi$, so the sign of~$\pi$ is both $(-1)^{\ell(\pi)}$ and $(-1)^{\ell'(\pi)}$, and hence $\ell(\pi) - \ell'(\pi)$ is even. Theorem~\ref{thm:gap-decomposition-intro} then gives $\beta(G_\pi) \equiv c(\pi) - \kappa(\pi) \pmod 2$, so the lower bound above can be attained only when $\beta(G_\pi)$ is even.

Both bounds are attained. The upper bound is attained by $321$, where we saw above that $\beta(G_{321}) = 1$ and that the cycle surplus is~$1$, so $\ell(321) - \ell'(321) = 2$. For the lower bound, parity rules out an example with cyclomatic number~$1$, but the next value does occur: the permutation $3421$ is a single $4$-cycle with five inversions, so $\beta(G_{3421}) = 5 - 4 + 1 = 2$ and $\ell(3421) - \ell'(3421) = 5 - 3 = 2$.


\subsection{Schubert varieties}

The cyclomatic number $\beta(G_\pi)$ also appears in the Schubert geometry of the flag variety. To each permutation $\pi \in S_n$ is associated a \emph{Schubert variety} $X_\pi$, a classical algebraic variety on which a torus~$T$ acts; the codimension of a generic $T$-orbit is called the \emph{$T$-complexity} of $X_\pi$, denoted by~$c_T(X_\pi)$. We do not develop this setting here, but we record what known results say about it from our viewpoint.

In type $A$, this complexity is $\ell(\pi) - |\mathrm{supp}(\pi)|$; see Lee, Masuda, and Park~\cite{lee:toric-bruhat-in:} and Donten-Bury, Escobar, and Portakal~\cite[Theorem~5.8 and Corollary~4.16]{donten-bury:complexity-of-t:}, and Gao and Hodges~\cite{gao:orbit-structure:fpsac} for a type-uniform treatment.

\begin{observation}
\label{obs:t-complexity-beta}
For every permutation $\pi \in S_n$, we have $c_T(X_\pi) = \beta(G_\pi)$.
\end{observation}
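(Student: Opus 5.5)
The plan is to combine the cited type-$A$ formula for the $T$-complexity with Theorem~\ref{thm:repeated-letters-intro}. By Lee, Masuda, and Park and by Donten-Bury, Escobar, and Portakal, $c_T(X_\pi) = \ell(\pi) - |\mathrm{supp}(\pi)|$. I will then show that the right-hand side equals $\beta(G_\pi)$ using only facts already recorded in the paper.

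First, fix any reduced word $w$ for~$\pi$. It has $\ell(\pi)$ letters in total. By Matsumoto's theorem, exactly $|\mathrm{supp}(\pi)|$ of these letters are distinct, and this count does not depend on the choice of $w$. So the number of repeated letters in $w$ (occurrences beyond the first of each letter) is $\ell(\pi) - |\mathrm{supp}(\pi)|$. Theorem~\ref{thm:repeated-letters-intro} says this number is exactly $\beta(G_\pi)$. Chaining the equalities gives
\[
c_T(X_\pi) = \ell(\pi) - |\mathrm{supp}(\pi)| = \beta(G_\pi).
\]

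As an independent check that avoids the spanning-forest machinery, I would also verify the identity $|\mathrm{supp}(\pi)| = n - \kappa(G_\pi)$ directly. The generator $s_i$ lies outside the support exactly when $\pi$ maps $[1,i]$ to itself. That happens exactly when $i$ is a boundary between sum components, so there are $\kappa(\pi) - 1$ missing generators among the $n-1$. By Proposition~\ref{prop:sum-components}, $\kappa(\pi) = \kappa(G_\pi)$, so $|\mathrm{supp}(\pi)| = (n-1) - (\kappa(G_\pi) - 1) = n - \kappa(G_\pi)$. Then
\[
\ell(\pi) - |\mathrm{supp}(\pi)| = |E(G_\pi)| - n + \kappa(G_\pi) = \beta(G_\pi),
\]
using $\ell(\pi) = |E(G_\pi)|$.

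There is no substantive obstacle. The only delicate point is to make sure the cited geometric formula is stated with the same convention as ours, namely Coxeter length minus the size of the support in type $A$, rather than, say, a version for the inverse permutation. Even that is harmless here: $\ell$, $\mathrm{supp}$, and $G_\pi$ up to isomorphism are all invariant under $\pi \mapsto \pi^{-1}$.
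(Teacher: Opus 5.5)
Your proposal is correct and follows the paper's route: the paper justifies the observation by quoting the type-$A$ formula $c_T(X_\pi) = \ell(\pi) - |\mathrm{supp}(\pi)|$ from Lee--Masuda--Park and Donten-Bury--Escobar--Portakal, then identifying the right-hand side with $\beta(G_\pi)$ via Theorem~\ref{thm:repeated-letters-intro}. Your second check, through $|\mathrm{supp}(\pi)| = n - \kappa(G_\pi)$, is the standard identity the paper itself notes right after that theorem, so it is a valid but redundant second derivation.
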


The forest case, $\beta(G_\pi) = 0$, is the case where $X_\pi$ is a \emph{toric variety} (a variety on which $T$ acts with a dense orbit); this special case was established earlier by Karuppuchamy~\cite{karuppuchamy:on-schubert-var:}.

Neuhaus, Portakal, and Paul~\cite{neuhaus:torus-actions-o:} express the complexity of a Kazhdan--Lusztig variety as the cyclomatic number of a graph attached to a chain in a Bruhat interval, and show that the variety is toric precisely when that graph is a forest. Their graph is built from the transpositions along the chain rather than from inversions, so this is a different statement from Observation~\ref{obs:t-complexity-beta}.

%
%

\section{Sum components and induced cycles}
\label{sec:foundations}

This section collects the structural facts about inversion graphs that we draw on throughout the paper. We first set up the direct-sum decomposition of a permutation and identify the sum components of~$\pi$ with the connected components of~$G_\pi$. Then we show that inversion graphs have no induced cycles of length five or more, and identify their induced triangles and induced $4$-cycles with copies of $321$ and $3412$.

\subsection{Sum components and connected components}
\label{subsec:sum-components}

The \emph{direct sum} of permutations $\pi_1 \in S_m$ and $\pi_2 \in S_n$ is the permutation $\pi_1 \oplus \pi_2 \in S_{m + n}$ whose one-line notation is obtained by concatenating the one-line notation of~$\pi_1$ with the one-line notation of~$\pi_2$, with each entry of the latter increased by~$m$. A permutation is \emph{sum indecomposable} if it cannot be written as the direct sum of two nonempty permutations; otherwise it can be written uniquely as $\alpha_1 \oplus \cdots \oplus \alpha_k$ with each $\alpha_i$ sum indecomposable, and these summands are the \emph{sum components} of~$\pi$.

The foundational fact of this section identifies these sum components with the connected components of~$G_\pi$. The result is folklore; we include a proof for completeness.

\begin{proposition}
\label{prop:sum-components}
The connected components of~$G_\pi$ are the sets of values of the sum components of~$\pi$. In particular, a permutation $\pi$ is sum indecomposable if and only if~$G_\pi$ is connected.
\end{proposition}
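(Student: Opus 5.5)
The plan is to prove two inclusions: each sum component's value set is a union of connected components of $G_\pi$ (no edges cross between summands), and each sum component's value set is connected in $G_\pi$. Together these give the first claim, and the second claim follows at once, since $\pi$ is sum indecomposable exactly when it has a single sum component.

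\emph{No crossing edges.} Write $\pi = \alpha_1 \oplus \cdots \oplus \alpha_k$. The summand $\alpha_j$ occupies a contiguous block of positions $I_j$ and a contiguous block of values $V_j$, with the same endpoints. If $j < j'$, every position of $I_j$ lies to the left of every position of $I_{j'}$, and every value of $V_j$ is smaller than every value of $V_{j'}$. Hence no value of $V_j$ forms an inversion with a value of $V_{j'}$. Each connected component of $G_\pi$ therefore lies inside a single $V_j$.

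\emph{Each block is connected.} It suffices to show that $G_\sigma$ is connected whenever $\sigma \in S_m$ is sum indecomposable; apply this to $\sigma = \alpha_j$, using that standardization preserves inversions. I argue the contrapositive. Suppose $G_\sigma$ is disconnected, and let $C$ be the connected component containing the value $\sigma(1)$. The key step is to show that $C$ is a proper initial interval $[1,r]$ of values that also occupies the positions $[1,r]$, which exhibits $\sigma = \sigma' \oplus \sigma''$ with both parts nonempty.

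The core of that step is that the set of positions occupied by $C$ is an interval $[1,r]$. Suppose some position $p$ holding a value $b \notin C$ lies strictly between two positions of $C$. Let $x$ be a value of $C$ to the left of $p$ and $y$ a value of $C$ to the right of $p$. Since $b$ is adjacent to neither $x$ nor $y$, we need $x < b$ (no inversion with the earlier $x$) and $b < y$ (no inversion with the later $y$), so $x < b < y$.

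Now take a path in $C$ from $x$ to $y$. Some edge $uv$ of this path must "cross" position $p$, with $u$ to the left of $p$ and $v$ to the right. This edge is an inversion, so $u > v$. Then $b$ must satisfy $u < b$ to avoid an inversion with $u$, and $b < v$ to avoid an inversion with $v$. That gives $u < b < v$, contradicting $u > v$. So $C$ occupies an interval of positions, and this interval starts at position $1$ because $\sigma(1) \in C$.

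An analogous argument, with the roles of values and positions exchanged (equivalently, applying the above to $\sigma^{-1}$, whose inversion graph is isomorphic), shows that $C$ is an interval of values. Because $C$ is in no inversion with the entries to its right, those entries are all larger than every value in $C$. So the values of $C$ are $[1,r]$, and $r < m$ since $G_\sigma$ is disconnected. This gives the decomposition $\sigma = \sigma' \oplus \sigma''$, contradicting indecomposability.

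I expect the main obstacle to be this "crossing edge" interval argument. In particular, the step showing that $C$'s values form an initial segment needs care: once the positions are known to be $[1,r]$, the absence of inversions with later entries forces every value in $C$ to be smaller than every later value, and this completes it directly.
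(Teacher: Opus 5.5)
Your proof is correct. It uses the same two ingredients as the paper's proof, arranged in the opposite direction and with positions and values exchanged.

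The paper starts from the components. It shows each component of $G_\pi$ is an interval of \emph{values}: if $a<b<c$ and $ac$ is an edge, then $b$ is adjacent to $a$ or $c$. Since there are no edges to the complement, that interval must occupy the matching positions. This builds a direct-sum decomposition out of the components. The easy direction (a further decomposition would disconnect a component) together with uniqueness of the sum decomposition then identifies these pieces as the sum components.

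You instead start from the given decomposition and prove two inclusions. The easy one is that no edges cross between summands. The hard one, indecomposable $\Rightarrow$ connected, you prove by contrapositive. Your key lemma is the positional mirror of the paper's: an entry lying in a position between the endpoints of an edge is adjacent to one of them, which you phrase as an edge of a path crossing position $p$. The no-edges-to-the-complement step then delivers the values $[1,r]$ directly.

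A small benefit of your arrangement is that it never invokes uniqueness of the sum decomposition. In fact it proves uniqueness, since any decomposition into indecomposables is forced to have the components as its value sets. The paper's version is a single constructive pass from the graph to the decomposition.

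Two pieces of your write-up can be cut:
\begin{itemize}
\item The inequality $x<b<y$ is never used; the crossing edge alone gives the contradiction.
\item The appeal to $\sigma^{-1}$ to show $C$ is an interval of values is unnecessary. As your own next sentence shows, once $C$ occupies positions $[1,r]$, the absence of inversions with later entries already forces its values to be $[1,r]$.
\end{itemize}
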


\begin{proof}
Suppose $a < b < c$ and $ac$ is an edge of~$G_\pi$. Then $c$ appears to the left of~$a$ in the one-line notation of~$\pi$. If $b$ appears to the right of~$a$, then $c$ appears to the left of $b$, so $cb$ is an edge. If $b$ appears to the left of~$a$, then $ba$ is an edge. Thus every value between the endpoints of an edge lies in the same connected component as that edge, and so every connected component of $G_\pi$ is an interval of values.

Let $I = [a, c]$ be a connected component of~$G_\pi$. Since no edge joins $I$ to its complement, every value less than~$a$ appears to the left of every value in $I$, and every value greater than $c$ appears to the right of every value in $I$. Hence the values in $I$ occupy precisely the positions $a, a+1, \ldots, c$.

It follows that~$\pi$ is the direct sum of its restrictions to the connected components of~$G_\pi$, ordered by value. Each of these restrictions is sum indecomposable, since a further direct-sum decomposition would disconnect the corresponding graph component. Thus these restrictions are precisely the sum components.
\end{proof}

We now turn to the basic interaction between the cycle structure of~$\pi$ and its sum decomposition. For a subset $I \subseteq [n]$, write $G_\pi[I]$ for the subgraph of~$G_\pi$ induced on $I$.

\begin{proposition}
\label{prop:cycles-in-components}
Every cycle of~$\pi$ is contained in a single sum component, and if $I \subseteq [n]$ is the set of values of a cycle of~$\pi$, then $G_\pi[I]$ is connected.
\end{proposition}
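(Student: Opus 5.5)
The second claim implies the first. If $G_\pi[I]$ is connected, then $I$ lies inside a single connected component of~$G_\pi$. By Proposition~\ref{prop:sum-components}, that component is the value set of a single sum component. So the plan is to prove directly that $G_\pi[I]$ is connected whenever $I$ is the set of values of a cycle of~$\pi$.

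The argument is by contradiction. Suppose $I = A \sqcup B$ with $A, B$ nonempty and no edge of~$G_\pi$ joining $A$ to $B$. Since $\pi$ restricted to $I$ is a single cycle, there must be some $a \in A$ with $\pi(a) \in B$ or vice versa. Otherwise $A$ would be $\pi$-invariant, and a single cycle has no proper nonempty invariant subset.

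The main step is to get a contradiction from the order structure. Because no value of $A$ is inverted with a value of $B$, the relative order of values between $A$ and $B$ agrees with their relative order of positions. In symbols: for $x \in A$ and $y \in B$, we have $x<y$ if and only if $\pi^{-1}(x)<\pi^{-1}(y)$.

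I would consider the minimum element $m$ of~$I$ and assume without loss of generality that $m \in A$. I would then try to show that $A$ must be closed under~$\pi$. It is cleaner, though, to use a counting argument over the whole set. Here positions of values in $I$ are $\pi^{-1}(I) = I$, since $I$ is a cycle.

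Choose $k$ to be the number of elements of $I$ that are at most some threshold, and compare the sets $\{x \in I : x \le t\}$ and $\{x\in I : \pi^{-1}(x) \le t\}$. Both have the same size, because $\pi^{-1}$ permutes~$I$.

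Concretely, I would order $I = \{x_1<\cdots<x_r\}$ and consider the bipartite comparison above. Since $A$ and $B$ interleave order-consistently in both value and position, one can show that the set of the $j$ smallest values of $I$ and the set of values in the $j$ smallest positions among $I$ coincide on their $A$/$B$ counts. From this I would deduce that $\pi$ maps $A$ to itself.

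The cleanest version I expect to work is this. Restrict $\pi$ to $I$ and standardize to a permutation $\sigma$ of $[r]$. Then $\sigma$ is a single $r$-cycle, and its inversion graph is $G_\pi[I]$, since standardization preserves inversions. If that graph is disconnected, Proposition~\ref{prop:sum-components} writes $\sigma$ as a nontrivial direct sum. The first summand's values $\{1,\dots,p\}$ with $p<r$ then occupy positions $1,\dots,p$, so they form a $\sigma$-invariant set. This contradicts $\sigma$ being an $r$-cycle.

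The one point needing care, and which I expect to be the main obstacle, is verifying that the standardization of $\pi|_I$ is again a single cycle. This relies on $I$ being simultaneously the set of values and the set of positions of the cycle. Then standardizing positions and values by the same order-preserving bijection $I \to [r]$ conjugates $\pi|_I$ to~$\sigma$, which preserves cycle type.
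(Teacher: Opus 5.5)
Your final argument is essentially the paper's: you standardize $\pi|_I$, which, as you correctly note, amounts to conjugation by the order-preserving bijection $I \to [r]$, so the result is an $r$-cycle (a point the paper states without comment); you observe that an $r$-cycle cannot be a nontrivial direct sum, and then invoke Proposition~\ref{prop:sum-components}. The only difference is that you obtain the first claim from the second via Proposition~\ref{prop:sum-components}, whereas the paper gets it from the $\pi$-invariance of each sum component; both routes work, and your exploratory $A$/$B$ counting sketch is unnecessary and can simply be dropped.
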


\begin{proof}
Each sum component is~$\pi$-invariant: the values in a sum component occupy a contiguous interval of positions, and~$\pi$ maps this interval to itself. A cycle of~$\pi$ therefore stays within a single sum component.

For the second claim, restrict~$\pi$ to $I$ and standardize. The resulting permutation is a single cycle, hence sum indecomposable, so by Proposition~\ref{prop:sum-components} its inversion graph is connected. Standardization preserves inversions, so this graph is isomorphic to $G_\pi[I]$.
\end{proof}

In particular, $c(\pi) \ge \kappa(\pi)$, so the cycle surplus $c(\pi) - \kappa(\pi)$ is indeed nonnegative.

By Proposition~\ref{prop:sum-components}, $\kappa(\pi) = \kappa(G_\pi)$. We use the two notations interchangeably from this point on.

\subsection{Induced cycles of inversion graphs}
\label{subsec:induced-cycles}

Inversion graphs cannot contain arbitrary induced cycles: their induced cycles are restricted to lengths three and four. This fact is folklore; we include a proof for completeness.

\begin{proposition}
\label{prop:no-long-induced-cycles}
No inversion graph contains an induced cycle of length five or more.
\end{proposition}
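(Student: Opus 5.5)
We will argue directly from the geometry of the plot of $\pi$: each vertex $v$ is the point $(\pi^{-1}(v), v)$, and two vertices are adjacent exactly when one lies strictly southeast of the other. Suppose for contradiction that $v_1, v_2, \ldots, v_k$ with $k \ge 5$ form an induced cycle in $G_\pi$. By the standardization remark (inversions are preserved), we may restrict $\pi$ to these $k$ values and assume $\pi \in S_k$ with $G_\pi \cong C_k$. Our aim is then to show that no permutation of length $k\ge 5$ has inversion graph $C_k$.

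The core observation is about transitivity. Nonadjacent vertices are comparable in the poset realized by the two linear orders (position and value both increase), and that comparability relation is transitive. Concretely: if $a$ lies southwest of $b$ and $b$ lies southwest of $c$, then $a$ lies southwest of $c$, so $a,c$ are nonadjacent. Dually, the inversion relation is transitive when oriented: if $a$ is northwest of $b$ (i.e.\ $a$ to the left and larger) and $b$ is northwest of $c$, then $a$ is northwest of $c$, making $ac$ an edge.

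We apply the second fact to consecutive edges of the induced cycle. Take three consecutive vertices $v_{i-1}, v_i, v_{i+1}$. Both $v_{i-1}v_i$ and $v_iv_{i+1}$ are edges, but $v_{i-1}v_{i+1}$ is not, since $k \ge 4$. So the two edges cannot be oriented "through" $v_i$. Either $v_i$ is northwest of both neighbors, or it is southeast of both. Hence the orientation of edges alternates around the cycle. Each vertex is a "source" (northwest endpoint of both its edges) or a "sink", and sources and sinks alternate. In particular $k$ is even, and the odd case $k\ge5$ is already excluded.

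For even $k \ge 6$, let the sources be $a_1, \ldots, a_{k/2}$ and the sinks $b_1,\ldots,b_{k/2}$. Now apply the transitivity of the southwest relation to nonadjacent pairs.

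1. Any two sources are nonadjacent, so they are comparable: one is southwest of the other. The sources therefore form an increasing sequence in both position and value. The same holds for the sinks.
2. Order the sources $a_1<\dots<a_m$ in this chain, with $m=k/2\ge 3$. Each sink is southeast of exactly two sources, namely its cycle neighbors.
3. The sources southeast-dominating a given sink $b$ form a contiguous interval of the chain. If $a_i$ and $a_j$ are both northwest of $b$ and $i<l<j$, then $a_l$ is left of $a_j$, hence left of $b$, and above $a_i$, hence above $b$. So $a_l$ is northwest of $b$ too.
4. Thus every sink is adjacent to a consecutive pair $\{a_i,a_{i+1}\}$. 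With $m$ sinks all distinct and the cycle connected, the pairs must cover the $m$ sources cyclically. But consecutive pairs in a linear chain form a path, not a cycle, which yields the contradiction.

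The main obstacle is this last counting/interval step. One must check that distinct sinks, being themselves totally ordered, yield distinct pairs that close up into a single $k$-cycle. This would force the pair $\{a_1,a_m\}$, which is not consecutive when $m\ge3$, and that is the contradiction.
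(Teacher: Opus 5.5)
Your proof is correct, but it takes a different route from the paper's. The paper's argument is local. It takes the cycle vertex $v_1$ in the leftmost position and its positionally closer neighbor $v_2$, which is necessarily southeast of $v_1$. It shows that the next vertex $v_3$ lies northwest of $v_2$ but northeast of $v_1$, since $v_1v_3$ is not an edge. The other neighbor $v_k$ of $v_1$ lies east of $v_2$ and below $v_1$, so it is southeast of $v_3$, and the chord $v_3v_k$ forces $k=4$. That is a single chord among four vertices, and it handles odd and even lengths together.

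You argue globally and in two stages. First, transitivity of the inversion orientation ($G_\pi$ is a comparability graph) forces the orientation to alternate around any induced cycle of length at least four, which excludes odd holes. Second, transitivity of the non-inversion relation makes the sources and the sinks into increasing chains. Your betweenness argument then puts each sink's two neighbors consecutively in the source chain, so the sinks would have to close a chain into an $m$-cycle.

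Your route is longer, but it yields more. For $m=2$ it shows directly that an induced $4$-cycle consists of two increasing sources, each northwest of two increasing sinks, i.e., a copy of $3412$; this is the second half of Proposition~\ref{prop:321:3412:forest}. Your contiguity-plus-consecutive-pairs mechanism also closely parallels the one the paper uses later in the caterpillar proof (Proposition~\ref{prop:unimodal-caterpillar}).

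Your final step can be stated more crisply. The first source $a_1$ of the chain has two sink neighbors. Each of their neighbor pairs must be the consecutive pair containing $a_1$, namely $\{a_1,a_2\}$. So $a_1$, these two sinks, and $a_2$ form a $4$-cycle inside the induced $k$-cycle, which is impossible for $k\ge 6$.

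Note also why distinct sinks give distinct pairs: two vertices of an induced $C_k$ with $k\ge 5$ share at most one common neighbor. It is not because the sinks are totally ordered; $3412$ itself has two ordered sinks with the same pair of sources.
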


\begin{proof}
We prove an equivalent statement: any induced cycle of length at least four in an inversion graph has length exactly four.

Suppose~$G_\pi$ contains an induced cycle on vertices $\pi(i_1), \ldots, \pi(i_k)$ with $k \ge 4$. Choose $i_1$ minimal among the indices of the cycle. The vertex $\pi(i_1)$ has two neighbors in the cycle; let $i_2$ be the index of the one whose position is closer to $i_1$, and label the remaining cycle vertices $\pi(i_3), \ldots, \pi(i_k)$ in cycle order, so that
\[
    \pi(i_1)\pi(i_2)\,, \ \pi(i_2)\pi(i_3)\,, \ \dots\,, \ \pi(i_{k-1})\pi(i_k)\,, \ \pi(i_k)\pi(i_1)
\]
are edges of~$G_\pi$. By minimality of $i_1$ and choice of $i_2$, we have $i_1 < i_2 < i_k$, and the point $(i_2, \pi(i_2))$ lies southeast of $(i_1, \pi(i_1))$. Since $\pi(i_3)\pi(i_2)$ is an edge and $\pi(i_3)\pi(i_1)$ is not, using that $k \ge 4$ and the cycle is induced, we have $i_1 < i_3 < i_2$ and $\pi(i_2) < \pi(i_1) < \pi(i_3)$.

Now consider $(i_k, \pi(i_k))$. Since $i_k > i_2$ and $\pi(i_k)\pi(i_1)$ is an edge, the point $(i_k, \pi(i_k))$ lies east of $(i_2, \pi(i_2))$ and southeast of $(i_1, \pi(i_1))$. These constraints force $(i_k, \pi(i_k))$ to lie southeast of $(i_3, \pi(i_3))$, giving an edge $\pi(i_k)\pi(i_3)$. Since the cycle is induced, this forces $k = 4$.
\end{proof}

The induced cycles of~$G_\pi$ are in bijection with copies of two small patterns.

\begin{proposition}
\label{prop:321:3412:forest}
The inversion graph~$G_\pi$ is a forest if and only if~$\pi$ avoids the patterns $321$ and~$3412$. More precisely, induced triangles of~$G_\pi$ are in bijection with copies of $321$ in~$\pi$, and induced $4$-cycles of~$G_\pi$ are in bijection with copies of $3412$ in~$\pi$.
\end{proposition}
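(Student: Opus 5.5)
The plan is to prove the two bijections first, treating each pattern copy as a set of values, and then obtain the forest characterization from them together with Proposition~\ref{prop:no-long-induced-cycles}. Throughout we use that standardization preserves inversions. Consequently, for any set $I$ of values, the induced subgraph $G_\pi[I]$ is isomorphic to the inversion graph of the pattern formed by the entries of $\pi$ with values in $I$.

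For triangles, take three values $a<b<c$. The induced subgraph on $\{a,b,c\}$ is a triangle exactly when every pair is an inversion. That happens exactly when the three entries occur in decreasing order, that is, when they form a copy of $321$. The map sending an induced triangle to the copy of $321$ on the same three values is therefore a bijection. For $4$-cycles, we first check directly that $3412$ is the only permutation of length four whose inversion graph is $C_4$. A $4$-cycle has four edges, so we enumerate the permutations of $[4]$ with exactly four inversions: $2431$, $3241$, $3412$, $4132$, $4213$. We compute each inversion graph. For example, $2431$ has edges $21, 41, 31, 43$, so vertex $1$ has degree $3$ and the graph is not $C_4$. The permutations $3241$, $4132$, $4213$ likewise contain a vertex of degree three (or a triangle). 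Only $3412$ yields the cycle $3\!-\!1\!-\!4\!-\!2\!-\!3$. By the observation above, a $4$-set of values induces a $C_4$ if and only if its pattern is $3412$, which gives the second bijection.

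For the forest statement, suppose first that $G_\pi$ is not a forest. Take a shortest cycle in $G_\pi$. A shortest cycle has no chords, so it is induced. By Proposition~\ref{prop:no-long-induced-cycles} it has length three or four, and by the bijections $\pi$ then contains $321$ or $3412$. Conversely, a copy of either pattern yields an induced triangle or an induced $4$-cycle, so $G_\pi$ is not a forest.

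The only step that requires care is the length-four enumeration. It is finite and routine; the main risk is simply missing a case.
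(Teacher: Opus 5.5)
Your proof is correct and has the same structure as the paper's. You establish the two bijections first, then observe that a shortest cycle is induced and so, by Proposition~\ref{prop:no-long-induced-cycles}, corresponds to a copy of $321$ or $3412$. The one local difference is the $4$-cycle step: you show $3412$ is the unique pattern with inversion graph $C_4$ by checking the five permutations of $[4]$ with four inversions (your list and degree checks are right), while the paper argues directly that an induced $K_{2,2}$ must have noninversions within its two classes and inversions across them, forcing the two larger values to come first in increasing order, followed by the two smaller ones.
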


\begin{proof}
A triangle of~$G_\pi$ requires each pair of the corresponding entries to be an inversion. The only way this is possible is if the three entries form a copy of $321$. The converse direction is immediate because every copy of $321$ in $\pi$ corresponds to an induced triangle in $G_\pi$.

A $4$-cycle is a copy of~$K_{2,2}$, and for $G_\pi$ to contain an induced copy of $K_{2,2}$, the two bipartition classes would have to correspond to noninversions of~$\pi$, while every pair of entries with one endpoint in each class would have to correspond to an inversion. Thus the two larger entries must appear first, in increasing order, followed by the two smaller entries, also in increasing order; that is, the four entries must form a copy of $3412$. Conversely, every copy of $3412$ in~$\pi$ corresponds to an induced $4$-cycle in~$G_\pi$.

By Proposition~\ref{prop:no-long-induced-cycles}, induced cycles of~$G_\pi$ have length three or four, and a graph containing a cycle contains an induced one (for instance a shortest cycle). Thus,~$G_\pi$ is a forest if and only if it contains no induced triangle and no induced $4$-cycle, which by the previous paragraphs is if and only if~$\pi$ avoids $321$ and $3412$.
\end{proof}

%
%

\section{Reduced words and inversion graphs}
\label{sec:repeated-letters}

This section proves Theorem~\ref{thm:repeated-letters-intro}, the identity $\ell(\pi) - |\mathrm{supp}(\pi)| = \beta(G_\pi)$, and derives the pattern-count result, Corollary~\ref{cor:pattern-bound-intro}, as a consequence. Every reduced word for~$\pi$ orders the edges of~$G_\pi$ one at a time, and we show that the edges contributed by first occurrences of letters form a spanning forest of~$G_\pi$. The repeated letters then contribute precisely the edges outside this forest, of which there are~$\beta(G_\pi)$.

As in Section~\ref{sec:reduced-words}, let $w = i_1 i_2 \cdots i_m$ be a reduced word for~$\pi$, and set $\pi_t = s_{i_1} \cdots s_{i_t}$, so $\pi_0 = e$ and $\pi_m = \pi$. At step $t$, applying $s_{i_t}$ swaps the entries in positions $i_t$ and $i_t + 1$ of $\pi_{t-1}$. Let $a_t < b_t$ be the values in those positions just before the swap, so that the swap creates the inversion $(a_t, b_t)$ of $\pi_t$. We write $f_t = a_t b_t$ for the corresponding edge of~$G_\pi$.

\begin{proposition}
\label{prop:reduced-word-adds-edge}
The edges $f_1, f_2, \ldots, f_m$ are distinct and account for every edge of~$G_\pi$.
\end{proposition}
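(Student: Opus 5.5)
The plan is to track the inversion set as the word is read one letter at a time. Write $\mathrm{Inv}(\sigma)$ for the set of pairs of values $\{a,b\}$ with $a<b$ and $b$ to the left of $a$ in $\sigma$. I will show that
\[
    \mathrm{Inv}(\pi_t) = \mathrm{Inv}(\pi_{t-1}) \cup \{f_t\}
\]
with $f_t \notin \mathrm{Inv}(\pi_{t-1})$, for every $t$. The statement follows from this in two lines. By induction, $\mathrm{Inv}(\pi_t) = \{f_1,\ldots,f_t\}$ with the $f_i$ pairwise distinct. Taking $t=m$ then gives $\mathrm{Inv}(\pi) = E(G_\pi)$, which is exactly $\{f_1,\dots,f_m\}$.

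For the key step, fix $t$ and compare $\pi_{t-1}$ with $\pi_t = \pi_{t-1}s_{i_t}$. Right multiplication by $s_{i_t}$ swaps only the entries in positions $i_t$ and $i_t+1$. Take any pair of values other than $\{a_t,b_t\}$. If neither value is in those two positions, their relative left-to-right order is unchanged. If exactly one is in those two positions, that value moves by one step past the other swapped value only, so its order relative to any third value is unchanged. Hence the inversion status of every pair except $\{a_t,b_t\}$ is the same in $\pi_{t-1}$ and $\pi_t$. The pair $\{a_t,b_t\}$ itself flips status, since its two values are adjacent and exchanged.

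It remains to show that the flip goes from non-inversion to inversion. Because $w$ is reduced, every prefix $i_1\cdots i_t$ is a reduced word for $\pi_t$: if $\pi_t$ had a shorter expression, substituting it would give a shorter expression for $\pi$. So $\ell(\pi_t)=t$. Combined with $\ell(\sigma)=\mathrm{inv}(\sigma)$ from Section~\ref{sec:reduced-words}, this gives $\mathrm{inv}(\pi_t)=\mathrm{inv}(\pi_{t-1})+1$. The flip therefore creates an inversion. Since $a_t<b_t$ are the values in positions $i_t, i_t+1$ before the swap, $a_t$ sits to the left of $b_t$ in $\pi_{t-1}$, which is consistent. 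So $f_t=a_tb_t$ is new, and all other inversions persist.

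The only step needing care is this prefix-reducedness argument, which rules out a later letter destroying an earlier edge. It is routine, so there is no real obstacle.
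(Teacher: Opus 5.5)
Your proof is correct and follows the same route as the paper's. Both show that $\mathrm{Inv}(\pi_t)$ is $\mathrm{Inv}(\pi_{t-1})$ together with the single new inversion $f_t$, using reducedness to get $\mathrm{inv}(\pi_t)=t$, and then iterate; you additionally make explicit two points the paper leaves implicit, namely that an adjacent swap changes the inversion status of only the swapped pair and that every prefix of a reduced word is itself reduced.
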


\begin{proof}
Since $w$ is reduced, the inversion set of $\pi_t$ has size $t$ for each $t$. The swap at step $t$ adds the inversion $(a_t, b_t)$, which was not previously an inversion, so the inversion set of $\pi_t$ is the inversion set of $\pi_{t-1}$ together with the new inversion $f_t$. Iterating, the inversions $f_1, \ldots, f_m$ are distinct and constitute the entire inversion set of~$\pi$, and the edges of~$G_\pi$ are precisely the inversions of~$\pi$.
\end{proof}

\subsection{First occurrences and spanning forests}
\label{subsec:first-occurrences}

Fix a permutation $\pi$ and let $w = i_1i_2\cdots i_m$ be a reduced word for $\pi$. Say that the $t$th letter of $w$ is a \emph{first occurrence} if $i_t \notin \{i_1, \ldots, i_{t-1}\}$, and let $F(w) \subseteq E(G_\pi)$ be the set of edges created by first occurrences in~$w$.

The next proposition is the core structural claim. It says that $F(w)$ is always a spanning forest of~$G_\pi$, regardless of which reduced word is chosen. The example of $\pi = 243179586$ in Figure~\ref{fig:spanning-forests-example} illustrates the construction concretely. The two reduced words $w$ and $w'$ given there produce different spanning forests $F(w)$ and $F(w')$, but each has $n - \kappa(G_\pi) = 7$ edges, with three on the connected component $\{1, 2, 3, 4\}$ and four on the connected component $\{5, 6, 7, 8, 9\}$. The proposition says this is no accident.

The proof of Proposition~\ref{prop:spanning-forest} tracks two pieces of information across the steps of $w$. The first is a partition of~$[n]$ into intervals, which coarsens as new letters appear: after step $t$, two values $x < y$ lie in the same interval if and only if each of the letters $x, x + 1, \ldots, y - 1$ has already appeared in $w$. The second is the location of the values themselves: after step $t$, the values in any interval $I$ occupy precisely the positions of $I$ in the current permutation. They have been shuffled by the reflections applied so far, but have not crossed into a neighboring interval.

\begin{proposition}
\label{prop:spanning-forest}
For every reduced word $w$ of a permutation~$\pi$, the set $F(w)$ is a spanning forest of~$G_\pi$.
\end{proposition}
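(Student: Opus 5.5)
We prove the statement by induction on $t$, tracking the two invariants described just before the statement. For $0 \le t \le m$, let $\mathcal{P}_t$ be the partition of $[n]$ into maximal intervals $[x,y]$ such that each of the letters $x, x+1, \ldots, y-1$ occurs among $i_1, \ldots, i_t$. Let $F_t$ be the set of first-occurrence edges among $f_1, \ldots, f_t$. The inductive claim has three parts:
\begin{enumerate}[label=\textup{(\alph*)}]
\item for each block $I$ of $\mathcal{P}_t$, the values in $I$ occupy exactly the positions of $I$ in $\pi_t$;
\item $F_t$ is a forest whose connected components are exactly the blocks of $\mathcal{P}_t$;
\item every edge $f_s$ with $s \le t$ has both endpoints in a single block of $\mathcal{P}_t$.
\end{enumerate}
At $t=0$ all three hold trivially: every block is a singleton, $\pi_0 = e$, and $F_0$ is empty.

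For the inductive step, suppose $i_t = i$. Positions $i$ and $i+1$ lie in blocks $I \ni i$ and $J \ni i+1$ of $\mathcal{P}_{t-1}$.

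\emph{Repeated letter.} If $i$ has already occurred, then $I = J$ and the partition does not change. By (a), the swapped values $a_t, b_t$ both lie in $I$, so $f_t$ joins two vertices of one block. Parts (a) and (c) persist, and (b) is untouched because $f_t \notin F$.

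\emph{First occurrence.} If $i$ is a first occurrence, then $I$ and $J$ are distinct adjacent blocks with $\max I = i$, and $\mathcal{P}_t$ merges them into $I \cup J$. By (a), the value in position $i$ lies in $I$ and the value in position $i+1$ lies in $J$. So $f_t$ joins the two trees of $F_{t-1}$ spanning $I$ and $J$, and $F_t$ restricted to $I \cup J$ is again a tree. Part (a) persists because the swap only moves values within $I \cup J$, and (c) is immediate.

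At $t = m$, part (b) says $F(w) = F_m$ is a forest whose components are the blocks of $\mathcal{P}_m$. Part (c), together with Proposition~\ref{prop:reduced-word-adds-edge}, says every edge of $G_\pi$ lies inside a block. It remains to show that the blocks of $\mathcal{P}_m$ are exactly the connected components of $G_\pi$. Each block is connected in $F(w) \subseteq G_\pi$, and no edge of $G_\pi$ leaves a block, so each block is a union of components of $G_\pi$ that is itself connected, hence exactly one component. Therefore $F(w)$ spans each component with a tree, i.e.\ it is a spanning forest. As a cross-check, Proposition~\ref{prop:sum-components} agrees: the blocks are $\pi$-invariant position intervals, which matches the sum components.

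The main obstacle is stating the invariants precisely enough that the first-occurrence step really joins two \emph{distinct} trees. This depends on invariant (a), that values never leave their block, which holds because a letter crossing a block boundary is by definition a first occurrence that immediately merges the blocks. No braid or commutation reasoning and no Matsumoto argument is needed; the proof is pure bookkeeping along the word.
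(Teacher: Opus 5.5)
Your proof is correct and follows essentially the same route as the paper: the same interval partition $\mathcal{P}_t$, the same invariants (values stay within the positions of their block, and first-occurrence edges form a forest whose components are the blocks), and the same case split between repeated letters and first occurrences. Your invariant (c) just makes explicit the paper's remark that each $f_t$ lies in a single part and that the partitions only coarsen. The final identification of the blocks of $\mathcal{P}_m$ with the components of $G_\pi$ also matches the paper's argument.
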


\begin{proof}
For $t \ge 0$, let $A_t \subseteq [n-1]$ be the set of letters appearing among $i_1, \ldots, i_t$, and let $\mathcal{P}_t$ be the partition of~$[n]$ into the maximal intervals obtained by cutting between $i$ and $i+1$ for each $i \in [n-1] \setminus A_t$. That is, if $[n-1] \setminus A_t = \{j_1 < j_2 < \cdots < j_q\}$, then the parts of $\mathcal{P}_t$ are the intervals $[j_k + 1, j_{k+1}]$ for $0 \le k \le q$, where $j_0 = 0$ and $j_{q+1} = n$. Let $F_t = \{f_s : s \le t \text{ and } i_s \text{ is a first occurrence}\}$, so that $F_m = F(w)$. We prove, by induction on $t$, that:
\begin{enumerate}[label=(\roman*)]
    \item
    \label{prop:spanning-forest:i}
    $F_t$ is an acyclic subgraph of~$G_\pi$ whose connected components are precisely the parts of $\mathcal{P}_t$;
    \item
    \label{prop:spanning-forest:ii}
    for each part $B$ of $\mathcal{P}_t$, the values occupying the positions of $B$ in the one-line notation of $\pi_t$ are precisely the values in $B$.
\end{enumerate}
Both invariants hold at $t = 0$: we have $A_0 = \emptyset$, the partition $\mathcal{P}_0$ consists of singletons, $F_0 = \emptyset$, and $\pi_0 = e$.

Suppose the invariants hold at time $t - 1$. If $i_t \in A_{t-1}$, then $A_t = A_{t-1}$, $\mathcal{P}_t = \mathcal{P}_{t-1}$, and $F_t = F_{t-1}$. Positions $i_t$ and $i_t + 1$ lie in the same part $B$ of $\mathcal{P}_{t-1}$, since no cut of $\mathcal{P}_{t-1}$ separates them. By invariant~\ref{prop:spanning-forest:ii}, the values at these two positions are both in $B$, so applying $s_{i_t}$ swaps two values of $B$ within positions of $B$. Both invariants are preserved, and we note for later use that both endpoints of the edge $f_t$ lie in $B$.

If instead $i_t$ is a first occurrence, then $\mathcal{P}_t$ is obtained from $\mathcal{P}_{t-1}$ by merging the two parts $B_L$ and $B_R$ on either side of the cut at $i_t$. Positions $i_t$ and $i_t + 1$ lie in $B_L$ and $B_R$ respectively, and by invariant~\ref{prop:spanning-forest:ii} the values at those positions are in $B_L$ and $B_R$; so the edge $f_t$ joins $B_L$ to $B_R$. As $B_L$ and $B_R$ are distinct components of $F_{t-1}$, the set $F_t = F_{t-1} \cup \{f_t\}$ is acyclic and has $B_L \cup B_R$ as a component, in agreement with $\mathcal{P}_t$; this preserves invariant~\ref{prop:spanning-forest:i}. The swap exchanges two values between the positions of $B_L$ and those of $B_R$, so the values of $B_L \cup B_R$ still occupy the positions of $B_L \cup B_R$ in $\pi_t$; this preserves invariant~\ref{prop:spanning-forest:ii}. In this case both endpoints of $f_t$ lie in the part $B_L \cup B_R$ of $\mathcal{P}_t$.

In either case, the endpoints of $f_t$ lie in a single part of $\mathcal{P}_t$. Since the partitions only coarsen as $t$ increases, every edge of~$G_\pi$ has both endpoints in a single part of the final partition $\mathcal{P}_m$. On the other hand, invariant~\ref{prop:spanning-forest:i} shows that each part of $\mathcal{P}_m$ is connected in~$G_\pi$, indeed already connected using only the edges of $F_m$. The parts of $\mathcal{P}_m$ are therefore precisely the connected components of~$G_\pi$, and $F(w) = F_m$ is an acyclic subgraph of~$G_\pi$ whose components are these parts, hence a spanning forest of~$G_\pi$.
\end{proof}

Proposition~\ref{prop:spanning-forest} also gives the identity $|\mathrm{supp}(\pi)| = n - \kappa(G_\pi)$: the forest $F(w)$ has one edge per first-occurrence letter, hence $|\mathrm{supp}(\pi)|$ edges in all, and every spanning forest of~$G_\pi$ has $n - \kappa(G_\pi)$ edges.

\subsection{Repeated letters and the cyclomatic number}
\label{subsec:repeated-letters-cyclomatic}

The spanning-forest description shows that the number of repeated letters in any reduced word is a graph invariant. A reduced word $w$ for~$\pi$ has $\ell(\pi) = |E(G_\pi)|$ letters, one per edge of~$G_\pi$ by Proposition~\ref{prop:reduced-word-adds-edge}. The first-occurrence letters contribute the $n - \kappa(G_\pi)$ edges of the spanning forest $F(w)$, by Proposition~\ref{prop:spanning-forest}. The repeated letters therefore account for the remaining edges, giving the following theorem.

\begin{theorem}
\label{thm:repeated-letters}
Every reduced word for a permutation~$\pi$ contains exactly $\beta(G_\pi)$ repeated letters.
\end{theorem}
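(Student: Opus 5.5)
The plan is to count the letters of an arbitrary reduced word $w = i_1 i_2 \cdots i_m$ for~$\pi$ in two ways and subtract. By Proposition~\ref{prop:reduced-word-adds-edge}, the map $t \mapsto f_t$ is a bijection from the positions of~$w$ onto $E(G_\pi)$. It therefore partitions $E(G_\pi)$ into two parts: the edges created at first-occurrence positions, which form the set $F(w)$, and the edges created at repeated-letter positions. The number of repeated letters is the size of the second part, which equals $|E(G_\pi)| - |F(w)|$.

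Next I would invoke Proposition~\ref{prop:spanning-forest}, which says that $F(w)$ is a spanning forest of~$G_\pi$. A spanning forest of a graph on $n$ vertices with $\kappa(G_\pi)$ components has exactly $n - \kappa(G_\pi)$ edges, since each tree on $k$ vertices has $k-1$ edges. Because distinct first-occurrence positions give distinct edges, we get $|F(w)| = n - \kappa(G_\pi)$. Substituting, the number of repeated letters is
\[
    |E(G_\pi)| - n + \kappa(G_\pi) = \beta(G_\pi).
\]
This value does not depend on the choice of~$w$, which proves the statement for every reduced word.

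There is no real obstacle here, because the substantive work is already done in Proposition~\ref{prop:spanning-forest}. The only point to state carefully is that the bijection of Proposition~\ref{prop:reduced-word-adds-edge} sends the first-occurrence positions injectively onto $F(w)$. This justifies $|F(w)|$ being both the number of distinct letters and the number of forest edges. With it, the repeated positions correspond exactly to the edges outside the forest.
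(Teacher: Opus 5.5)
Your proposal is correct and is essentially the paper's argument: the bijection of Proposition~\ref{prop:reduced-word-adds-edge} splits the letters of $w$ into first-occurrence edges, which by Proposition~\ref{prop:spanning-forest} form a spanning forest with $n - \kappa(G_\pi)$ edges, and repeated-letter edges, which therefore number $|E(G_\pi)| - n + \kappa(G_\pi) = \beta(G_\pi)$. You also spell out a detail the paper leaves implicit, namely that injectivity of $t \mapsto f_t$ makes $|F(w)|$ equal to the number of first occurrences, and you handle it correctly.
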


The permutations with no repetition in any reduced word are easy to identify. By Theorem~\ref{thm:repeated-letters}, no reduced word for~$\pi$ contains a repeated letter precisely when $\beta(G_\pi) = 0$, that is, when~$G_\pi$ is a forest.

\begin{corollary}
\label{cor:tenner-no-repeats}
No reduced word for~$\pi$ contains a repeated letter if and only if~$G_\pi$ is a forest.
\end{corollary}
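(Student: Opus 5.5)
This corollary is a direct specialization of Theorem~\ref{thm:repeated-letters}, so the plan is to read off both directions from the count of repeated letters. By Theorem~\ref{thm:repeated-letters}, every reduced word $w$ for~$\pi$ has exactly $\beta(G_\pi)$ repeated letters. This number does not depend on the choice of $w$. Hence the statement ``no reduced word for~$\pi$ contains a repeated letter'' is equivalent to $\beta(G_\pi) = 0$. Equally, it is equivalent to the statement that some reduced word has no repeated letter, since $\pi$ always has at least one reduced word.

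It remains to note that $\beta(G_\pi) = 0$ exactly when $G_\pi$ is a forest. By definition, $\beta(G_\pi)$ is the minimum number of edges whose removal leaves a forest. Alternatively, use the formula $\beta(G_\pi) = |E(G_\pi)| - |V(G_\pi)| + \kappa(G_\pi)$. Every spanning forest has $n - \kappa(G_\pi)$ edges, so $\beta(G_\pi)=0$ says that $G_\pi$ equals its own spanning forest, i.e., is acyclic.

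One could also argue more structurally through Proposition~\ref{prop:spanning-forest}. The first-occurrence edges $F(w)$ form a spanning forest. By Proposition~\ref{prop:reduced-word-adds-edge}, the repeated letters correspond bijectively to the edges outside $F(w)$. So $w$ has no repeated letters exactly when $E(G_\pi) = F(w)$, which says $G_\pi$ is a forest. There is no real obstacle here. The only point to state carefully is the independence from the choice of reduced word, which Theorem~\ref{thm:repeated-letters} already supplies.
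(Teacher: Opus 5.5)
Your proposal is correct and matches the paper's argument: the corollary is read off directly from Theorem~\ref{thm:repeated-letters}, since every reduced word has exactly $\beta(G_\pi)$ repeated letters and $\beta(G_\pi)=0$ precisely when $G_\pi$ is a forest. Your second route, via $F(w)$ and Proposition~\ref{prop:reduced-word-adds-edge}, just unpacks the proof of that theorem, so it is equally valid.
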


Combined with the avoidance characterization of forests given by Proposition~\ref{prop:321:3412:forest}, Corollary~\ref{cor:tenner-no-repeats} recovers Tenner's theorem~\cite{tenner:pattern-avoidan:} that the permutations whose reduced words contain no repeated letter are precisely those avoiding $321$ and $3412$.

The number of repeated letters can also be bounded above by pattern counts. The following bound was established by Tenner~\cite{tenner:repetition-in-r:}; Theorem~\ref{thm:repeated-letters} gives a short graph-theoretic proof.

\begin{corollary}
\label{cor:repetitions-by-patterns}
The number of repeated letters in any reduced word for~$\pi$ is at most the total number of copies of $321$ and $3412$ in~$\pi$.
\end{corollary}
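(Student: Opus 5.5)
By Theorem~\ref{thm:repeated-letters}, every reduced word for~$\pi$ contains exactly $\beta(G_\pi)$ repeated letters. It therefore suffices to prove the graph-theoretic inequality that $\beta(G_\pi)$ is at most the number of induced triangles plus the number of induced $4$-cycles of~$G_\pi$. Proposition~\ref{prop:321:3412:forest} identifies the induced triangles of~$G_\pi$ with the copies of $321$ in~$\pi$, and the induced $4$-cycles with the copies of $3412$. So the inequality about induced cycles is exactly the claimed bound.

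The key step is to show that the induced cycles of any graph $G$ span its cycle space over $\mathbb{F}_2$. I will argue by strong induction on cycle length that every cycle $C$ of $G$ lies in the span of the induced cycles. If $C$ has no chord, it is itself induced and there is nothing to prove. If $C$ has a chord $xy$, the chord splits $C$ into two shorter cycles $C_1$ and $C_2$, each consisting of one arc of $C$ together with the edge $xy$. Their symmetric difference is $C_1 + C_2 = C$, since the chord cancels. By induction, both $C_1$ and $C_2$ lie in the span of the induced cycles, and hence so does $C$.

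Since the cycle space has dimension $\beta(G)$ and is spanned by the induced cycles, there are at least $\beta(G)$ induced cycles. By Proposition~\ref{prop:no-long-induced-cycles}, every induced cycle of $G_\pi$ has length three or four. Combining these facts gives
\[
\beta(G_\pi) \le \#\{\text{induced triangles}\} + \#\{\text{induced } 4\text{-cycles}\},
\]
which finishes the argument.

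The only point that needs care is the bijection between induced $4$-cycles and copies of $3412$. Each induced $4$-cycle is an induced subgraph on four vertices, so distinct induced $4$-cycles have distinct vertex sets. Each vertex set in turn determines a single copy of $3412$. Hence the counting is exact, with no overcounting, and I expect no real obstacle beyond the spanning argument above.
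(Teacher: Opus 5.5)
Your proposal is correct and follows essentially the same route as the paper: reduce to $\beta(G_\pi)$ via Theorem~\ref{thm:repeated-letters}, show that induced cycles span the cycle space by splitting along chords, and then apply Proposition~\ref{prop:no-long-induced-cycles} together with the correspondence from Proposition~\ref{prop:321:3412:forest}. Your remark that distinct induced $4$-cycles have distinct vertex sets, so the count of copies of $3412$ is exact, is a small detail that the paper leaves implicit.
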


\begin{proof}
The induced cycles of a graph span its cycle space: any cycle with a chord is the symmetric difference of two shorter cycles, so induction on length expresses every cycle as a sum of chordless cycles. The number of induced cycles is therefore at least the cyclomatic number. By Proposition~\ref{prop:no-long-induced-cycles}, every induced cycle of~$G_\pi$ has length $3$ or $4$, and by the proof of Proposition~\ref{prop:321:3412:forest} these correspond to copies of $321$ and $3412$ in~$\pi$, respectively. The corollary follows from Theorem~\ref{thm:repeated-letters}.
\end{proof}

Let $[321, 3412](\pi)$ denote the total number of copies of $321$ and $3412$ in~$\pi$. We have just seen, by combining Proposition~\ref{prop:321:3412:forest} with Corollary~\ref{cor:tenner-no-repeats}, that when $[321, 3412](\pi) = 0$, every reduced word for~$\pi$ has zero repeated letters. Daly~\cite{daly:reduced-decompo:} showed the analogous statement for the next value: when $[321, 3412](\pi) = 1$, every reduced word for~$\pi$ has exactly one repeated letter. The cycle-space viewpoint extends this one step further. If $[321, 3412](\pi) \le 2$, then~$G_\pi$ has at most two induced cycles, and since two distinct cycles are distinct nonzero elements of the cycle space, they are linearly independent over $\mathbb{F}_2$; thus $\beta(G_\pi) = [321, 3412](\pi)$ and every reduced word for~$\pi$ contains $[321, 3412](\pi)$ repeated letters. The conclusion fails for the next value: the permutation $\pi = 34512$ has inversion graph $K_{3, 2}$, with three induced $4$-cycles and no triangles, so $[321, 3412](34512) = 3$ while $\beta(G_{34512}) = 2$.

Corollary~\ref{cor:repetitions-by-patterns} is sharp precisely when the induced cycles of~$G_\pi$ form a basis of the cycle space. By Theorem~\ref{thm:repeated-letters} and Proposition~\ref{prop:321:3412:forest}, both sides of the bound depend only on $G_\pi$ (the repetition count is $\beta(G_\pi)$; the total $321$ and $3412$ count is the number of induced triangles and induced $4$-cycles), so sharpness is an invariant of the inversion graph. Tenner~\cite{tenner:repetition-in-r:} characterized the permutations for which the bound is sharp as those avoiding ten particular patterns.
The inversion graphs of these ten patterns fall into exactly four isomorphism classes, and grouping the patterns by their inversion graphs gives
\[
	\begin{array}{ll}
		K_4: & 4321,\\[4pt]
		K_{3,2}: & 34512, 45123,\\[4pt]
		\overline{K}_2 \vee P_3: & 45231, 45312, 53412,\\[4pt]
		\overline{K}_2 \vee (K_1 \cup K_2): & 35412, 43512, 45132, 45213,
	\end{array}
\]
where $G \vee H$ denotes the \emph{join}, the graph obtained from disjoint copies of $G$ and $H$ by adding every edge between them. These ten permutations are all of the permutations realizing the four graphs, so containing one of the patterns is the same as having one of the graphs as an induced subgraph, and Tenner's theorem takes a purely graph-theoretic form: the induced triangles and induced $4$-cycles of an inversion graph are linearly independent in its cycle space if and only if none of the four occurs in it as an induced subgraph. None of the four contains another, so they are the minimal obstructions to independence. This graph-theoretic statement can also be proved on its own terms, without the pattern classification, but we do not give that argument here.

%
%

\section{The length gap}
\label{sec:gap-and-forests}

Theorem~\ref{thm:gap-decomposition-intro} establishes the gap decomposition $\ell(\pi) - \ell'(\pi) = \beta(G_\pi) + \bigl(c(\pi) - \kappa(\pi)\bigr)$ by substitution, and Section~\ref{sec:foundations} shows that every cycle of~$\pi$ is contained in a single sum component, with the values of that cycle inducing a connected subgraph of~$G_\pi$ (Proposition~\ref{prop:cycles-in-components}). What remains is the conceptually nontrivial part: that the cycle surplus is bounded above by the cyclomatic number,
\[
    c(\pi) - \kappa(\pi) \le \beta(G_\pi).
\]
This says that the cycle space of~$G_\pi$ controls how many more cycles than sum components~$\pi$ can have. Combined with the gap decomposition, it sandwiches the length gap as $\beta(G_\pi) \le \ell(\pi) - \ell'(\pi) \le 2 \beta(G_\pi)$ and forces the gap to vanish precisely when~$G_\pi$ is a forest.

The proof rests on the following parity argument.

\begin{proposition}
\label{prop:cross-edges-even}
Let $I$ and $J$ be disjoint~$\pi$-invariant subsets of~$[n]$. The number of edges of~$G_\pi$ with one endpoint in $I$ and the other in $J$ is even.
\end{proposition}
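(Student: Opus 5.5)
Let $I$ and $J$ be disjoint $\pi$-invariant subsets. The plan is to express the number of cross edges as a quantity whose parity we can read off from a sign. Restrict $\pi$ to $I\cup J$ and standardize; this gives a permutation $\sigma$ whose inversion graph is isomorphic to $G_\pi[I\cup J]$, because standardization preserves inversions. Write $\sigma_I$ and $\sigma_J$ for the standardizations of the restrictions of $\pi$ to $I$ and to $J$. Each of these is a genuine permutation, since $I$ and $J$ are invariant, so $\pi$ permutes the positions carrying values of $I$ exactly as it permutes the values of $I$. Counting inversions of $\sigma$, we get
\[
\mathrm{inv}(\sigma) = \mathrm{inv}(\sigma_I) + \mathrm{inv}(\sigma_J) + e(I,J),
\]
where $e(I,J)$ is the number of cross edges. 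So it suffices to show $\mathrm{inv}(\sigma) \equiv \mathrm{inv}(\sigma_I) + \mathrm{inv}(\sigma_J) \pmod 2$, that is, $\mathrm{sgn}(\sigma) = \mathrm{sgn}(\sigma_I)\,\mathrm{sgn}(\sigma_J)$.

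For the sign identity, use cycle structure, since the sign of a permutation is $(-1)^{(\text{size}) - (\text{number of cycles})}$. Because $I$ is $\pi$-invariant, the cycles of $\pi$ contained in $I$ are exactly the cycles of $\pi|_I$. Standardization is an order-preserving relabeling of values, and by invariance the same relabeling applies to positions, so it carries these cycles bijectively to the cycles of $\sigma_I$. The same holds for $J$ and for $I\cup J$. The cycles of $\sigma$ are therefore the disjoint union of the cycles of $\sigma_I$ and of $\sigma_J$. Sizes add and cycle counts add, so the signs multiply.

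The one point needing care, and the main obstacle, is this claim that standardizing an invariant subset conjugates the restriction. Concretely, let $\phi\colon I\to[|I|]$ be the order-preserving bijection. Then $\sigma_I = \phi\circ\pi|_I\circ\phi^{-1}$. This holds because the positions occupied by values of $I$ form exactly the set $I$: these positions are $\pi^{-1}(I) = I$. Once that is checked, the rest is the computation above: combining the inversion count with the sign identity gives $e(I,J) \equiv 0 \pmod 2$.
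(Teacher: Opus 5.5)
Your proof is correct and follows the paper's argument: restrict $\pi$ to $I\cup J$, split its inversions into those within $I$, within $J$, and across, and use $\mathrm{sgn}(\sigma)=\mathrm{sgn}(\sigma_I)\,\mathrm{sgn}(\sigma_J)$ to force the cross count to be even. The one addition is that you standardize and check that standardization is conjugation by the order-preserving bijection, using $\pi^{-1}(I)=I$. This is a useful justification that the paper skips by treating the restriction directly as a permutation of the ordered set $I\cup J$.
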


\begin{proof}
Let $\sigma$ denote the restriction of~$\pi$ to $I \cup J$, a permutation of the set $I \cup J$; its inversions are the inversions of~$\pi$ with both endpoints in $I \cup J$, and its sign is determined by their parity. Because $I$ and $J$ are~$\pi$-invariant, $\sigma$ is the product of two disjoint permutations supported on $I$ and on $J$ respectively, so
\[
    \mathrm{sgn}(\sigma) = \mathrm{sgn}(\sigma|_I) \cdot \mathrm{sgn}(\sigma|_J).
\]
The inversions of $\sigma$ split into three groups by where their endpoints lie: both within~$I$, both within~$J$, or one in each. The first two groups determine the signs $\mathrm{sgn}(\sigma|_I)$ and $\mathrm{sgn}(\sigma|_J)$, so the sign identity forces the third group to be even, and its inversions are precisely the edges of~$G_\pi$ between~$I$ and~$J$.
\end{proof}

We can now bound the cycle surplus above by the cyclomatic number.

\begin{theorem}
\label{thm:surplus-bound}
For every permutation~$\pi$,
\[
    c(\pi) - \kappa(\pi) \le \beta(G_\pi).
\]
In particular, if~$G_\pi$ is a forest, then $c(\pi) = \kappa(\pi)$.
\end{theorem}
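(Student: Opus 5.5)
Our plan is to use the contraction argument indicated after Theorem~\ref{thm:surplus-bound-intro}, with Proposition~\ref{prop:cross-edges-even} supplying the parity input. By Proposition~\ref{prop:sum-components} and Proposition~\ref{prop:cycles-in-components}, both sides of the inequality are additive over sum components. The cycles of~$\pi$ partition each connected component of~$G_\pi$, and $\beta$ is additive over connected components. So we may assume~$\pi$ is sum indecomposable, i.e.\ $G_\pi$ is connected and $\kappa(\pi)=1$. We must then show $c(\pi)-1 \le \beta(G_\pi)$.

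Let $C_1,\ldots,C_c$ be the value sets of the cycles of~$\pi$, with $c=c(\pi)$. Form the multigraph $H$ on vertex set $\{C_1,\ldots,C_c\}$ by contracting each $C_j$ to a single vertex. We keep every edge of~$G_\pi$ joining distinct $C_j$ and $C_k$ as a parallel edge, and discard edges inside a single $C_j$. We then bound $\beta(G_\pi)$ below in two stages.

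First, since each $G_\pi[C_j]$ is connected by Proposition~\ref{prop:cycles-in-components}, it contains a spanning tree $T_j$ with $|C_j|-1$ edges. Put $m_{\mathrm{in}}=\sum_j |E(G_\pi[C_j])|\ge n-c$, and let $m_{\mathrm{out}}=|E(H)|$. Then
\[
\beta(G_\pi) = m_{\mathrm{in}} + m_{\mathrm{out}} - n + 1 \ge m_{\mathrm{out}} - c + 1 = \beta(H),
\]
where $H$ is connected because~$G_\pi$ is.

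Second, we show $\beta(H)\ge c-1$, i.e.\ $|E(H)|\ge 2(c-1)$. Each $C_j$ is~$\pi$-invariant, so by Proposition~\ref{prop:cross-edges-even} the number of edges of~$G_\pi$ between $C_j$ and $C_k$ is even for $j\ne k$. Thus every edge multiplicity in $H$ is even. Because $H$ is connected, its underlying simple graph has at least $c-1$ edges, and each of these has multiplicity at least~$2$. Hence $|E(H)|\ge 2(c-1)$, giving $\beta(H)=|E(H)|-c+1\ge c-1$. Combining the two stages yields $c(\pi)-\kappa(\pi)\le\beta(G_\pi)$.

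The forest case follows at once: if $\beta(G_\pi)=0$, then $c(\pi)\le\kappa(\pi)$, and the reverse inequality is already known, so $c(\pi)=\kappa(\pi)$. The only step needing care is the first stage, where we must use the connectivity of each $G_\pi[C_j]$ to absorb the internal edges. Everything else is counting.
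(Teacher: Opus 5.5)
Your proof is correct and follows essentially the same route as the paper. Both arguments reduce to a single connected component, contract the value set of each cycle of~$\pi$ (using the connectivity of each $G_\pi[C_j]$ from Proposition~\ref{prop:cycles-in-components}), and use Proposition~\ref{prop:cross-edges-even} to get at least $2(c-1)$ edges in the contracted connected multigraph; the only difference is that you split the count into $\beta(G_\pi)\ge\beta(H)$ and $\beta(H)\ge c-1$, whereas the paper writes the single identity $\beta(C)=\sum_j\beta(G_\pi[I_j])+|E(Q)|-r+1$.
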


\begin{proof}
It suffices to verify the inequality inside one connected component $C$ of~$G_\pi$, since both quantities are additive over connected components. Let $I_1, \ldots, I_r$ be the value sets of the cycles of~$\pi$ contained in the corresponding sum component, with $r \ge 1$. By Proposition~\ref{prop:cycles-in-components}, each $G_\pi[I_j]$ is connected, and by Proposition~\ref{prop:cross-edges-even}, the number of edges of~$G_\pi$ between any two of the $I_j$ is even.

The contribution of $C$ to the cycle surplus is $r - 1$. We show that its contribution to~$\beta(G_\pi)$ is at least $r - 1$. Contract each $G_\pi[I_j]$ in $C$ to a single vertex, discarding the resulting loops but retaining parallel edges between distinct contracted vertices, and let $Q$ denote the resulting loopless multigraph on $r$ vertices. Since $C$ is connected, so is $Q$, and hence at least $r - 1$ pairs of its vertices are joined by an edge. By Proposition~\ref{prop:cross-edges-even}, every such pair is joined by an even number of edges, hence by at least two, so $|E(Q)| \ge 2(r - 1)$. Counting edges,
\[
    \beta(C) = |E(C)| - |V(C)| + 1 = \sum_{j=1}^r \beta(G_\pi[I_j]) + |E(Q)| - r + 1 \ge 2(r - 1) - r + 1 = r - 1,
\]
which is the desired bound on the contribution of $C$.

Summing over all connected components gives $c(\pi) - \kappa(\pi) \le \beta(G_\pi)$.
\end{proof}

We can now collect the consequences. The gap decomposition is proved in Section~\ref{sec:introduction} by direct substitution; combined with the surplus bound, it yields a characterization of permutations with $\ell(\pi) = \ell'(\pi)$.

\begin{corollary}
\label{cor:forest-characterization}
The equality $\ell(\pi) = \ell'(\pi)$ holds if and only if~$G_\pi$ is a forest, in which case~$\pi$ has exactly $\kappa(\pi)$ cycles, one supported on each sum component.
\end{corollary}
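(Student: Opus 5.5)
The plan is to read both directions off the gap decomposition of Theorem~\ref{thm:gap-decomposition-intro}, $\ell(\pi) - \ell'(\pi) = \beta(G_\pi) + \bigl(c(\pi) - \kappa(\pi)\bigr)$, together with the surplus bound of Theorem~\ref{thm:surplus-bound}. The only inputs are these two results and the nonnegativity of the cycle surplus.

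First suppose $\ell(\pi) = \ell'(\pi)$. Then the right-hand side of the gap decomposition vanishes. Both summands are nonnegative: $\beta(G_\pi) \ge 0$ as a cycle-space dimension, and $c(\pi) \ge \kappa(\pi)$ by Proposition~\ref{prop:cycles-in-components}. So each summand is zero. In particular $\beta(G_\pi) = 0$, meaning $G_\pi$ is a forest, and also $c(\pi) = \kappa(\pi)$.

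Conversely, suppose $G_\pi$ is a forest, so $\beta(G_\pi) = 0$. Theorem~\ref{thm:surplus-bound} gives $0 \le c(\pi) - \kappa(\pi) \le \beta(G_\pi) = 0$, so the surplus is zero as well. The gap decomposition then yields $\ell(\pi) - \ell'(\pi) = 0$. Equivalently, one can cite the sandwich of Corollary~\ref{cor:sandwich-intro} with $\beta(G_\pi) = 0$.

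For the final clause, in either case we have $c(\pi) = \kappa(\pi)$. By Proposition~\ref{prop:cycles-in-components}, every cycle of~$\pi$ lies inside a single sum component. Each sum component is $\pi$-invariant and nonempty, so it contains at least one cycle. The total number of cycles equals the number of components, so each component contains exactly one cycle. That cycle is therefore supported on the entire component.

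There is no genuine obstacle here; all the substance sits in Theorem~\ref{thm:surplus-bound}. The only care needed is the counting step just described: at least one cycle per component, with equal totals, forces exactly one per component.
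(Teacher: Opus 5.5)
Your proof is correct and follows the paper's argument essentially verbatim: both directions come from the gap decomposition and the nonnegativity of its two summands, the converse uses the surplus bound of Theorem~\ref{thm:surplus-bound}, and the final clause uses Proposition~\ref{prop:cycles-in-components}. Your explicit counting step, that each sum component is invariant and so contains at least one cycle and equal totals then force exactly one, spells out what the paper leaves implicit.
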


\begin{proof}
The gap decomposition has two nonnegative terms. If the gap vanishes, both terms must be zero, so $\beta(G_\pi) = 0$; that is,~$G_\pi$ is a forest. Conversely, if~$G_\pi$ is a forest, then $\beta(G_\pi) = 0$, and Theorem~\ref{thm:surplus-bound} forces $c(\pi) = \kappa(\pi)$. The cycle structure in the forest case follows from Proposition~\ref{prop:cycles-in-components}.
\end{proof}

These results have a particularly clean structural consequence: when~$G_\pi$ is a forest, the sum decomposition of~$\pi$ and the cycle decomposition of~$\pi$ are the same partition of~$[n]$.

%
%

\section{Unimodal cycles and caterpillars}
\label{sec:edelman}

Edelman~\cite{edelman:on-inversions-a:} studied the minimum number of inversions among permutations with a fixed number of cycles. He showed that for $1\le k\le n$,
\[
	\min\{\ell(\sigma) : \text{$\sigma \in S_n$ and $c(\sigma) = k$}\} = n - k,
\]
and that this minimum is attained precisely by the direct sums of unimodal cycles of intervals. Since $\ell'(\pi) = n - c(\pi)$, achieving Edelman's minimum is the same as satisfying $\ell(\pi) = \ell'(\pi)$. Edelman's theorem therefore says that $\ell(\pi) = \ell'(\pi)$ if and only if~$\pi$ is a direct sum of unimodal cycles of intervals. In this section we recover this direct-sum description from the forest viewpoint, and as a structural bonus we show that every connected acyclic inversion graph is a caterpillar.

A \emph{unimodal cycle of an interval}%
\footnote{The terminology is potentially confusing: the unimodal cycles considered here are not the unimodal permutations studied elsewhere. A permutation $\pi = \pi_1 \cdots \pi_n$ is \emph{unimodal} if its one-line notation increases to its maximum and then decreases, equivalently if $\pi \in \mathrm{Av}(213, 312)$; these permutations are enumerated by $2^{n-1}$. This permutation class has been studied from several perspectives, including its cyclic structure and connections with one-dimensional dynamics by Gannon~\cite{gannon:the-cyclic-stru:}, its cycle enumerator by Thibon~\cite{thibon:the-cycle-enume:}, and its relationship with almost-increasing cycles by Archer and Lauderdale~\cite{archer:unimodal-permut:}, who give a bijection between almost-increasing cycles on~$[n]$ and unimodal permutations on $[n-1]$. The unimodal cycle terminology is also distinct from Chung's study of long unimodal subsequences in arbitrary permutations~\cite{chung:on-unimodal-sub:}.}
is a permutation of an interval of integers that can be written as a single cycle whose entries, after cyclic rotation, strictly increase to a maximum and then strictly decrease. After standardizing the interval to~$[n]$ and rotating the cycle to end with its minimum value (namely $1$ in the standardized form), the cycle has the form $(u_1, \ldots, u_r, n, d_1, \ldots, d_s, 1)$ where $u_1 < \cdots < u_r < n$ and $n > d_1 > \cdots > d_s > 1$. The cases $r = 0$ and $s = 0$ are allowed, as is the singleton cycle $(1)$, a fixed point.

For instance, take the permutation $\gamma = (3,4,5,8,9,7,6,2,1)$, written in cycle notation. The entries of this cycle increase to the maximum $9$ and then decrease to $1$, so $\gamma$ is a unimodal cycle of $[9]$. Its one-line notation, $\gamma = 314582697$, is not unimodal as a sequence itself. The inversion graph $G_\gamma$, shown in Figure~\ref{fig:unimodal-caterpillar}, is a caterpillar: its leaves are $1$, $4$, $5$, $6$, and $9$, and after removing them we obtain the path $3, 2, 8, 7$.

\begin{figure}[htbp]
\begin{center}
\begin{footnotesize}
	\begin{tikzpicture}[scale=0.42,baseline=(current bounding box.center)]
	\draw[darkgray,thick,rounded corners=0.01,line cap=round] (0,0) rectangle (10,10);
	\plotperm{3,1,4,5,8,2,6,9,7}
	\draw[dotted] (v3)--(v1);
	\draw[dotted] (v2)--(v4);
	\draw[dotted] (v2)--(v5);
	\draw[dotted] (v8)--(v6);
	\draw[dotted] (v7)--(v9);
	\draw (v3)--(v2)--(v8)--(v7);
	\labelnode[-45]{v1}{$1$}
	\labelnode[-45]{v2}{$2$}
	\labelnode[135]{v3}{$3$}
	\labelnode[135]{v4}{$4$}
	\labelnode[135]{v5}{$5$}
	\labelnode[-45]{v6}{$6$}
	\labelnode[-45]{v7}{$7$}
	\labelnode[135]{v8}{$8$}
	\labelnode[135]{v9}{$9$}
	\end{tikzpicture}
\end{footnotesize}
\end{center}
\caption{The inversion graph of the unimodal cycle $\gamma = (3,4,5,8,9,7,6,2,1) = 314582697$. The dotted edges are the leaf edges; removing them leaves the spine $3, 2, 8, 7$.}
\label{fig:unimodal-caterpillar}
\end{figure}

We record one useful feature of this normalization. In one-line notation, the position of a value is its predecessor in the cycle $(u_1, \ldots, u_r, n, d_1, \ldots, d_s, 1)$ above. It follows that the values $u_1, \ldots, u_r, n$ are precisely the \emph{left-to-right maxima}, meaning the entries larger than every entry to their left, while the values $d_1, \ldots, d_s, 1$ are the remaining entries; in fact, these remaining entries are \emph{right-to-left minima}, meaning entries smaller than every entry to their right. Thus, every entry of a unimodal cycle is either a left-to-right maximum or a right-to-left minimum. In particular, this shows immediately that unimodal cycles avoid $321$, because the middle entry of a copy of $321$ can be neither a left-to-right maximum nor a right-to-left minimum.

Sum components correspond to connected components of~$G_\pi$ by Proposition~\ref{prop:sum-components}, and~$G_\pi$ is a forest precisely when each of these components is a tree. The forest statement therefore reduces to the tree statement, and we treat only the connected case.

\subsection{Trees and unimodal cycles of intervals}
\label{subsec:trees-unimodal}

If~$G_\pi$ is a tree, then $\ell(\pi) = n - 1$ and $|\mathrm{supp}(\pi)| = n - 1$, so every reduced word for~$\pi$ uses each of $s_1, \ldots, s_{n-1}$ exactly once. The permutations whose reduced words use each simple reflection exactly once are the \emph{Coxeter elements} of~$S_n$, and the following proposition is the standard classification of Coxeter elements of~$S_n$ (and, by restriction, of the parabolic subgroup generated by $s_a, s_{a+1}, \ldots, s_{b-1}$). The result is well known; we include a proof because the inductive construction tracks the cycle structure explicitly, which we use below.

\begin{proposition}
\label{prop:coxeter-unimodal}
Let $a \le b$ be integers, and let $\gamma$ be a permutation of $[a, b]$. Then $\gamma$ is a unimodal cycle of $[a, b]$ if and only if it has a reduced word in which each of $s_a, s_{a+1}, \ldots, s_{b-1}$ appears exactly once.
\end{proposition}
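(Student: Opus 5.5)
We would standardize $[a,b]$ to $[n]$. Both conditions are invariant under shifting values and indices by $a-1$, so it suffices to treat permutations of $[n]$ and the generators $s_1,\ldots,s_{n-1}$. We argue by induction on $n$. The base case $n=1$ is the fixed point $(1)$ with the empty word, and $n=2$ is immediate. Throughout, we use one basic computation. If $x$ fixes $n$ and has $n-1$ in some cycle $(\ldots,p,n-1,q,\ldots)$, then right multiplication $x s_{n-1} = x\circ(n-1,n)$ sends $n\mapsto x(n-1)=q$ and $n-1\mapsto n$. So its cycle is $(\ldots,p,n-1,n,q,\ldots)$, meaning $n$ is inserted immediately after $n-1$. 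Similarly, left multiplication $s_{n-1}x$ inserts $n$ immediately before $n-1$.

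\emph{Coxeter element $\Rightarrow$ unimodal cycle.} Let $w$ be a reduced word using each of $1,\ldots,n-1$ exactly once. The letter $n-1$ occurs once and commutes with every other letter except $n-2$, which also occurs once. Using commutations, we can therefore slide $n-1$ to the end of $w$ if it appears after $n-2$, and to the front otherwise. This gives $\gamma = x s_{n-1}$ or $\gamma = s_{n-1}x$, where $x$ is the product of a word using each of $1,\ldots,n-2$ exactly once. That word is reduced, being a subword-by-deletion of a reduced word in a way that keeps it a factor. By induction, $x$ restricted to $[n-1]$ is a unimodal cycle of $[n-1]$ with maximum $n-1$, and $x$ fixes $n$. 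The insertion computation places $n$ adjacent to $n-1$ in this single cycle. Since $n-1$ was the peak, the new cycle rises to $n$ with $n-1$ immediately on one side, so it is still unimodal, now on $[n]$.

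\emph{Unimodal cycle $\Rightarrow$ Coxeter element.} Let $\gamma$ be a unimodal cycle of $[n]$ with $n\ge 2$. In the normal form $(u_1,\ldots,u_r,n,d_1,\ldots,d_s,1)$, the value $n-1$ must be either $u_r$ or $d_1$, so it is a cycle-neighbor of $n$. Deleting $n$ from the cycle yields a unimodal cycle $x$ of $[n-1]$, and the insertion computation in reverse gives $\gamma = x s_{n-1}$ (if $n-1=u_r$) or $\gamma = s_{n-1}x$ (if $n-1=d_1$). By induction, $x$ has a reduced word using each of $1,\ldots,n-2$ once. Appending or prepending the letter $n-1$ gives a word of length $n-1$ for $\gamma$ using each letter once.

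The step we expect to need the most care is reducedness of this word. We would settle it with the length bounds already available. Since $\gamma$ is an $n$-cycle, $\ell'(\gamma)=n-c(\gamma)=n-1$, and $\ell'\le\ell$ always holds. Hence $\ell(\gamma)\ge n-1$, so any word of length $n-1$ for $\gamma$ is reduced. The same bound also confirms that $x$ in the first direction is given by a reduced word. Apart from this, the remaining work is to check the insertion computation and the unimodality bookkeeping carefully, including the edge cases $r=0$ and $s=0$.
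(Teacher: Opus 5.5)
Your proof is correct and follows the same inductive scheme as the paper's, mirrored: you peel off the largest value $n$ and the generator $s_{n-1}$, inserting $n$ next to $n-1$, whereas the paper peels off the smallest value $a$ and the generator $s_a$, inserting $a$ next to $a+1$. The only substantive difference is in the reducedness step, where your bound $\ell(\gamma) \ge \ell'(\gamma) = n - c(\gamma) = n-1$ replaces the paper's argument that every $s_i$ must occur in any expression for $\gamma$ because $\gamma$ does not preserve $[1,i]$; both arguments are valid.
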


\begin{proof}
We apply induction on $b - a$, removing the smallest value~$a$ from the cycle and the generator~$s_a$ from the word at each step.

If $b - a = 0$, then $[a, b] = \{a\}$ and $\gamma$ is the identity, the singleton unimodal cycle, with the empty reduced word.

If $b - a = 1$, then $\gamma$ is a unimodal cycle of $[a, b]$ if and only if $\gamma = (a+1, a) = s_a$, whose only reduced word, $s_a$, uses $s_a$ once. So the result holds.

For $b - a \ge 2$, suppose first that $\gamma$ has a reduced word using each of $s_a, \ldots, s_{b-1}$ exactly once. The generator $s_a$ commutes with every $s_j$ for $j \ge a + 2$, so among the letters of the word it fails to commute only with $s_{a+1}$. Since $s_{a+1}$ occurs just once, we may commute~$s_a$ past everything on one side of it, moving $s_a$ to the far left or the far right according to whether it occurs before or after~$s_{a+1}$:
\[
	\gamma = s_a \delta
	\qquad\text{or}\qquad
	\gamma = \delta s_a,
\]
where $\delta$ has a reduced word using each of $s_{a+1}, \ldots, s_{b-1}$ exactly once. By induction, $\delta$ is a unimodal cycle of $[a+1, b]$.

Since~$a$ is fixed by $\delta$, we may write $\delta$ in our normalization as $(v_1, \ldots, v_k, a+1)$, with the entries increasing to~$b$ and then decreasing to~$a+1$. Multiplying on the left gives, after rotation, $s_a \delta = (a+1, v_1, \ldots, v_k, a)$, a unimodal cycle of $[a, b]$ with~$a+1$ immediately following~$a$ in the cyclic order. Multiplying on the right gives $\delta s_a = (v_1, \ldots, v_k, a+1, a)$, again a unimodal cycle of~${[a, b]}$, this time with~$a+1$ immediately preceding~$a$.

Conversely, suppose $\gamma$ is a unimodal cycle of $[a, b]$, written
\[
	\gamma = (u_1, \ldots, u_r, b, d_1, \ldots, d_s, a).
\]
By unimodality, the value $a + 1$ is adjacent to~$a$ in the cyclic order: if it lies on the increasing side it equals $u_1$ and immediately follows~$a$, and if it lies on the decreasing side it equals $d_s$ and immediately precedes~$a$. Multiplying $\gamma$ by $s_a = (a, a+1)$ on the left in the first case, and on the right in the second case, removes~$a$ from the cycle and leaves a unimodal cycle of $[a+1, b]$. By induction, that smaller cycle has a reduced word using each of $s_{a+1}, \ldots, s_{b-1}$ exactly once, and reinserting $s_a$ on the appropriate side expresses $\gamma$ using each of $s_a, \ldots, s_{b-1}$ exactly once.

This expression is reduced. For each $i$ with $a \le i \le b - 1$, an expression in the simple reflections that avoids $s_i$ never swaps the entries in positions $i$ and $i + 1$, so it never moves a value across that cut, and hence the permutation it expresses preserves the set $[1, i]$. But $\gamma$ does not preserve $[1, i]$: it fixes $[1, a - 1]$ pointwise and is a single cycle on $[a, b]$, so it cannot preserve the proper nonempty subset $[a, i]$ of $[a, b]$. Every expression for $\gamma$ therefore contains each of $s_a, \ldots, s_{b-1}$, so $\ell(\gamma) \ge b - a$, and the constructed expression, of length $b - a$, is reduced.
\end{proof}

The tree statement follows.

\begin{proposition}
\label{prop:tree-unimodal}
An inversion graph~$G_\pi$ is a tree if and only if~$\pi$ is a unimodal cycle of~$[n]$.
\end{proposition}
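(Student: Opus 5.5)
Both directions go through the fact that a tree on $n$ vertices has exactly $n-1$ edges, combined with Proposition~\ref{prop:coxeter-unimodal} applied with $[a,b]=[1,n]$.

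For the forward direction, suppose $G_\pi$ is a tree. Then $\kappa(G_\pi)=1$ and $\beta(G_\pi)=0$. By Theorem~\ref{thm:repeated-letters}, every reduced word for~$\pi$ has no repeated letter, and its length is $\ell(\pi)=|E(G_\pi)|=n-1$. By Proposition~\ref{prop:spanning-forest}, equivalently the identity $|\mathrm{supp}(\pi)|=n-\kappa(G_\pi)$, the word uses exactly $n-1$ distinct letters from $[n-1]$. So every reduced word uses each of $s_1,\ldots,s_{n-1}$ exactly once, and Proposition~\ref{prop:coxeter-unimodal} says $\pi$ is a unimodal cycle of $[n]$.

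For the converse, suppose $\pi$ is a unimodal cycle of $[n]$. Proposition~\ref{prop:coxeter-unimodal} gives a reduced word using each $s_i$ exactly once, so $|E(G_\pi)|=\ell(\pi)=n-1$. For connectivity there are two routes. One is the spanning-forest invariant: the first-occurrence edges form a spanning forest with $n-1=|\mathrm{supp}(\pi)|$ edges, so $\kappa(G_\pi)=n-|\mathrm{supp}(\pi)|=1$. The other is Proposition~\ref{prop:cycles-in-components}: $\pi$ is a single cycle on $[n]$, so $G_\pi=G_\pi[[n]]$ is connected. Either way, a connected graph on $n$ vertices with $n-1$ edges is a tree. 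The degenerate case $n=1$, a fixed point with a single-vertex tree, is covered by the same argument, or can be checked directly.

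There is no real obstacle. The substantive work was done in Proposition~\ref{prop:coxeter-unimodal}, which in the forward direction needs only a reduced word using each generator once, and here every reduced word has that property. The one point to watch is that the support has size $n-1$, meaning every generator actually appears. This needs connectivity through $|\mathrm{supp}(\pi)|=n-\kappa(G_\pi)$, and not only the absence of repeated letters.
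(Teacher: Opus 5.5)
Your proof is correct and follows the paper's. In the forward direction, a tree gives $\ell(\pi)=|\mathrm{supp}(\pi)|=n-1$, so every reduced word uses each $s_i$ exactly once and Proposition~\ref{prop:coxeter-unimodal} applies. In the converse, that proposition gives $n-1$ edges, and your second route to connectivity (a single $n$-cycle is sum indecomposable) is the paper's argument, while your support-identity route is an equally valid variant.
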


\begin{proof}
Suppose~$G_\pi$ is a tree. Then $\ell(\pi) = |E(G_\pi)| = n - 1$ and $\kappa(\pi) = 1$, so $|\mathrm{supp}(\pi)| = n - 1$. Every reduced word for~$\pi$ has length $n - 1$ and uses $n - 1$ distinct simple reflections, so each of $s_1, \ldots, s_{n-1}$ appears exactly once. By Proposition~\ref{prop:coxeter-unimodal},~$\pi$ is a unimodal cycle of~$[n]$.

Conversely, suppose~$\pi$ is a unimodal cycle of~$[n]$. By Proposition~\ref{prop:coxeter-unimodal},~$\pi$ has a reduced word using each of $s_1, \ldots, s_{n-1}$ exactly once, so $\ell(\pi) = n - 1$ and hence $|E(G_\pi)| = n - 1$. Since~$\pi$ is a single~$n$-cycle, it is sum indecomposable, so~$G_\pi$ is connected by Proposition~\ref{prop:sum-components}. A connected graph on~$n$ vertices with $n - 1$ edges is a tree.
\end{proof}

Taking direct sums gives the following.

\begin{corollary}
\label{cor:forest-direct-sum-unimodal}
The inversion graph~$G_\pi$ is a forest if and only if~$\pi$ is a direct sum of unimodal cycles of intervals.
\end{corollary}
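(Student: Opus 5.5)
The plan is to reduce the forest statement to the tree statement of Proposition~\ref{prop:tree-unimodal}, working one sum component at a time by means of Proposition~\ref{prop:sum-components}. Write $\pi = \alpha_1 \oplus \cdots \oplus \alpha_k$ as the direct sum of its sum components. By Proposition~\ref{prop:sum-components}, the connected components of~$G_\pi$ are exactly the value sets of these summands, which are intervals $I_1, \ldots, I_k$. Moreover, the induced subgraph $G_\pi[I_j]$ is isomorphic to $G_{\alpha_j}$, since standardization preserves inversions. So $G_\pi$ is a forest if and only if each $G_{\alpha_j}$ is a tree, because each $G_{\alpha_j}$ is connected.

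For the forward direction, suppose $G_\pi$ is a forest. Then each $G_{\alpha_j}$ is a tree, so Proposition~\ref{prop:tree-unimodal} shows that each standardized summand $\alpha_j$ is a unimodal cycle of $[|I_j|]$. Translating back, the restriction of~$\pi$ to $I_j$ is a unimodal cycle of the interval~$I_j$, because the shift by a constant preserves the cyclic increasing-then-decreasing shape. Hence $\pi$ is a direct sum of unimodal cycles of intervals.

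For the converse, suppose $\pi = \gamma_1 \oplus \cdots \oplus \gamma_k$, where each $\gamma_j$, after standardization, is a unimodal cycle. Each standardized $\gamma_j$ then has a tree as its inversion graph, by Proposition~\ref{prop:tree-unimodal}. Since a direct sum creates no inversions between summands, $G_\pi$ is the disjoint union of these trees, and is therefore a forest.

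The only point needing care is that the decomposition $\gamma_1 \oplus \cdots \oplus \gamma_k$ is not assumed to be the decomposition into sum components. The converse argument avoids this issue entirely, since it uses only the absence of edges between summands and never needs each $\gamma_j$ to be indecomposable. Indecomposability does hold anyway, because a single cycle is sum indecomposable. I expect no real obstacle here; the substance lies entirely in Proposition~\ref{prop:tree-unimodal}.
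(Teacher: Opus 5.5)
Your proposal is correct and follows the paper's own route. Like the paper, you use Proposition~\ref{prop:sum-components} to reduce the forest statement to the tree statement on each sum component, and then apply Proposition~\ref{prop:tree-unimodal} to each component, with your remark that the converse needs no indecomposability of the given summands making explicit a point the paper leaves implicit.
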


Proposition~\ref{prop:321:3412:forest} identifies forests with the permutations avoiding $321$ and $3412$, and Corollary~\ref{cor:forest-direct-sum-unimodal} identifies them with the direct sums of unimodal cycles of intervals; together these show that Edelman's minimum-inversion permutations are precisely the elements of $\mathrm{Av}(321, 3412)$.

%
%

\subsection{Connected acyclic inversion graphs and caterpillars}
\label{subsec:caterpillar}

A \emph{caterpillar} is a tree in which every nonleaf vertex lies on a single path, called the \emph{spine}; equivalently, removing all leaves yields a path.

We use the following elementary characterization. A tree~$T$ is a caterpillar if and only if its line graph has a Hamiltonian path; equivalently, if and only if the edges of~$T$ can be listed
\[
	e_1, e_2, \ldots, e_m
\]
so that consecutive edges share an endpoint. Indeed, if such an ordering is given with $m \ge 2$ (all trees with fewer edges are caterpillars), let $x_i$ be the endpoint shared by $e_i$ and $e_{i+1}$, and suppress consecutive repetitions from the sequence
\[
	x_1, x_2, \ldots, x_{m-1}.
\]
Any two consecutive terms of the resulting sequence are joined by an edge of~$T$, and since~$T$ has no cycles and no parallel edges, no vertex can recur, so the sequence is a path in~$T$. Every edge of~$T$ is incident with this path, so every vertex off the path is a leaf. The converse is immediate by listing the edges of a caterpillar along the spine, inserting the leaf edges incident with each spine vertex when that vertex is reached.

By Proposition~\ref{prop:tree-unimodal}, every connected acyclic inversion graph is the inversion graph of a unimodal cycle of~$[n]$, so it suffices to prove the caterpillar conclusion for inversion graphs of unimodal cycles. As discussed in the introduction, this result is not new; our contribution is the proof via unimodal cycles.

\begin{proposition}
\label{prop:unimodal-caterpillar}
If~$\pi$ is a unimodal cycle of~$[n]$, then~$G_\pi$ is a caterpillar.
\end{proposition}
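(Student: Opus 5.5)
We use the line-graph criterion stated just above: it suffices to list the edges of~$G_\pi$ so that consecutive edges share an endpoint. By Proposition~\ref{prop:tree-unimodal}, $G_\pi$ is already a tree, so only this ordering needs to be produced. We build it inductively, following the recursion in the proof of Proposition~\ref{prop:coxeter-unimodal}. The plan is to strengthen the claim so that it is preserved when a new smallest value is added.

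\emph{Strengthened claim.} If $\gamma$ is a unimodal cycle of $[a,b]$ with $b>a$, then the edges of $G_\gamma$ admit such an ordering whose first edge contains the vertex~$a$.

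\emph{Base case.} For $b=a+1$ the graph is the single edge $\{a,a+1\}$, and the claim is trivial.

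\emph{Inductive step.} Write $\gamma=s_a\delta$ or $\gamma=\delta s_a$, where $\delta$ is a unimodal cycle of $[a+1,b]$ fixing~$a$. By induction, $G_\delta$ restricted to $[a+1,b]$ has an edge ordering $e_1,\dots,e_{m}$ with $a+1\in e_1$. We then compare $G_\gamma$ with $G_\delta$.

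\emph{Case $\gamma=\delta s_a$.} Right multiplication swaps positions $a$ and $a+1$. Position $a$ of $\delta$ holds the value~$a$, and position $a+1$ holds $\delta(a+1)$. The value $\delta(a+1)$ is the successor of $a+1$ in the cycle of $\delta$. Since the cycle of $\delta$ ends with its minimum $a+1$, this successor is $u_1$, or $b$ when $r=0$. The swap moves~$a$ one step right, creating exactly one new edge $\{a,x\}$ with $x=\delta(a+1)$. Every other pair keeps its relative order. Thus $G_\gamma=G_\delta+\{a,x\}$, and $a$ is a leaf attached to~$x$.

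\emph{Case $\gamma=s_a\delta$.} Left multiplication swaps the values $a$ and $a+1$. Now $a+1$ sits at position $a$ and $a$ sits where $a+1$ was. The inversions involving $a+1$ in $G_\delta$ are transferred to~$a$. The new edge $\{a,a+1\}$ appears, and $a+1$ has no other neighbours because position $a$ is leftmost in the block. So $G_\gamma$ is obtained from $G_\delta$ by relabeling $a+1\mapsto a$ and attaching a new leaf $a+1$ at~$a$.

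\emph{Key obstacle.} In the left-multiplication case, relabeling turns $e_1$ into an edge containing~$a$. Prepending the leaf edge $\{a,a+1\}$ then gives a valid ordering that starts at~$a$. In the right-multiplication case, the new leaf hangs at $x=\delta(a+1)$, and $x$ need not lie on the first edge. This is the step I expect to be hardest. To handle it, I would strengthen the invariant further: the ordering starts with an edge containing the \emph{smallest} vertex and ends with an edge containing the \emph{largest} vertex, or the spine endpoints are tracked explicitly. I would then check directly that $x$ is adjacent to $a+1$ in $G_\delta$ or in $G_\gamma$. Indeed, in $\gamma$ the value $a+1$ sits at position $a$ and $x$ sits to its right, so $x$ lies on $e_1$ or on an edge that can be moved to the front.

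\emph{Fallback.} If this bookkeeping fails, I would argue directly from the one-line notation. Every entry is a left-to-right maximum or a right-to-left minimum, and unimodal cycles avoid $321$, so $G_\pi$ is bipartite between these two classes. Order the maxima $M_1<\dots<M_p$ and the minima $m_1<\dots<m_q$. The neighbourhoods of the $M_i$ among the minima are intervals whose endpoints are weakly increasing in~$i$. Sweeping the $M_i$ in order and listing each one's edges then yields consecutive edges sharing an endpoint, with consecutive $M_i$ linked through a shared minimum because the graph is connected.
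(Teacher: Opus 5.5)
Your inductive route has a genuine gap at exactly the point you flagged. In the case $\gamma=\delta s_a$ the new leaf $a$ hangs at $x=\delta(a+1)$. The invariant ``the first edge contains the minimum'' says, for $\delta$, only that $a+1\in e_1$, so it gives no control over whether $x\in e_1$. None of your proposed remedies repairs this:
\begin{itemize}
\item a condition on the last edge is irrelevant, because you only ever prepend;
\item the sentence ``in $\gamma$ the value $a+1$ sits at position $a$ and $x$ sits to its right'' describes the case $\gamma=s_a\delta$, not this one (here position $a$ holds $x$, and $a+1$ lies further right);
\item moving some edge to the front of such an ordering generally destroys the shared-endpoint property.
\end{itemize}

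The invariant that does close the induction is sharper: the first edge is exactly $\{a,\gamma(a)\}$, joining the minimum to the first entry of the one-line notation. This is an edge whenever $b>a$, and it holds for $b=a+1$. It also propagates in both cases:
\begin{itemize}
\item if $\gamma=\delta s_a$, then $\gamma(a)=x$ and the ordering for $\delta$ begins with $\{a+1,x\}$, so you prepend $\{a,x\}$;
\item if $\gamma=s_a\delta$, then $\gamma(a)=a+1$ and the first edge $\{a+1,\delta(a+1)\}$ of $\delta$ relabels to $\{a,\delta(a+1)\}$, so you prepend $\{a,a+1\}$.
\end{itemize}
With this change your induction is a complete proof. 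It is also genuinely different from the paper's, since it rides on the recursion of Proposition~\ref{prop:coxeter-unimodal} rather than on the split into left-to-right maxima and the remaining entries.

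Your fallback is the paper's argument, but it skips the only step that uses acyclicity. Write $N(M_i)=\{m_{l_i},\dots,m_{r_i}\}$ in your notation. Monotone endpoints together with connectivity give only $l_{i+1}\le r_i$. The sweep, with each block listed in increasing order, needs $l_{i+1}=r_i$; otherwise the consecutive edges $M_i m_{r_i}$ and $M_{i+1}m_{l_{i+1}}$ share no endpoint.

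The paper obtains this equality from a non-crossing argument. Suppose $u$ precedes $u'$ among the maxima, $y'$ precedes $y$ among the minima, and $uy$ and $u'y'$ are both edges. Then $uy'$ and $u'y$ are also inversions, so $u,y',u',y$ span a $4$-cycle, contradicting that $G_\pi$ is a tree.

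You should also justify the interval and monotonicity claims. For example, $N(u)$ is the intersection of a suffix of the minima list (the entries to the right of $u$) with a prefix (the values below $u$), and both cut points move weakly rightward as $u$ advances.
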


\begin{proof}
The cases $n \le 2$ are immediate, so assume $n \ge 3$. By Proposition~\ref{prop:tree-unimodal},~$G_\pi$ is a tree, and~$\pi$ avoids $321$.

Let $L$ be the values of the left-to-right maxima of~$\pi$ and let $M$ be the values of the remaining entries, both listed from left to right. The list $L$ is increasing by definition, and $M$ is increasing because~$\pi$ avoids $321$. Because of this monotonicity, every edge of~$G_\pi$ joins a vertex of~$L$ to a vertex of~$M$.

For each $u \in L$, its neighbors in~$M$ form a consecutive block of the $M$-list. Indeed, suppose $u$ is adjacent to the entries $y'$ and $y$ of~$M$, with $y'$ before $y$. Then $u$ appears to the left of $y'$ and exceeds $y$, so every entry of~$M$ between $y'$ and $y$ in the list appears to the right of~$u$ and is smaller than~$u$, hence is also a neighbor of~$u$.

The key restriction is that these edges cannot cross: there do not exist $u$ before $u'$ in $L$ and $y'$ before $y$ in $M$ such that $uy$ and $u'y'$ are both edges of~$G_\pi$. If there were, then since both lists are increasing, $uy'$ and $u'y$ would also be inversions, and $u$, $y'$, $u'$, and $y$ would span a cycle in~$G_\pi$, contradicting that~$G_\pi$ is a tree. Consequently, if $u$ appears before $u'$ in $L$, then every neighbor of $u'$ occurs at or after every neighbor of~$u$ in the $M$-list, so the neighbor blocks of consecutive values of~$L$ either are disjoint or meet in a single entry of~$M$.

Consecutive blocks cannot be disjoint, since the cut between them would disconnect~$G_\pi$: earlier values of~$L$ have all their neighbors on one side of the cut, later values on the other. Hence, consecutive blocks meet in precisely one entry of~$M$.

List the edges of~$G_\pi$ block by block: first all edges incident with the first value of~$L$, in the order of their endpoints in~$M$; then all edges incident with the second value of~$L$; and so on. This lists every edge exactly once. Consecutive edges within a block share their endpoint in~$L$, and consecutive edges from different blocks share the entry of~$M$ where the two blocks meet. Thus, this edge ordering is a Hamiltonian path in the line graph of~$G_\pi$, and the characterization above implies that~$G_\pi$ is a caterpillar.
\end{proof}

The edge ordering in this proof can also be described by sorting the edges by the sum of their endpoint values: within a block the endpoint in~$L$ is fixed and the endpoint in~$M$ increases, while from one block to the next the shared endpoint in~$M$ is fixed and the endpoint in~$L$ increases, so the endpoint sums strictly increase along the path.

%
%

\section{Concluding remarks}
\label{sec:conclusion}

The cyclomatic number $\beta(G_\pi)$ of the inversion graph governs several apparently unrelated statistics of the permutation~$\pi$: every reduced word for~$\pi$ contains $\beta(G_\pi)$ repeated letters, the cycle surplus of~$\pi$ is at most $\beta(G_\pi)$, the length gap satisfies $\beta(G_\pi) \le \ell(\pi) - \ell'(\pi) \le 2\beta(G_\pi)$, and the total number of copies of $321$ and $3412$ in~$\pi$ is at least $\beta(G_\pi)$.

Reflection length also suggests a broader question: is there an analogue of this statistic for arbitrary simple graphs? In forthcoming work with Mandrick~\cite{mandrick:reflections-in-:}, which grew out of his thesis~\cite{mandrick:on-inversion-gr:} and in turn inspired the present work, we introduce an operation on simple graphs called \emph{edge reflection}, modeled on the effect that multiplying a permutation by a reflection has on its inversion graph, and study the resulting length statistic. Just as $\ell(\pi) - \ell'(\pi) = 0$ precisely when $G_\pi$ is a forest, the corresponding length gap in the graph setting vanishes precisely when the graph is a forest.

%
%

\bibliographystyle{abbrv}
\bibliography{references}

@unpublished{neuhaus:torus-actions-o:,
	author = {Neuhaus, Elke and Portakal, {\.I}rem and Paul, Niharika Chakrabarty},
	doi = {10.48550/arXiv.2510.00250},
	note = {arXiv:2510.00250 [math.AG]},
	title = {Torus Actions on Matrix {S}chubert and {K}azhdan--{L}usztig Varieties, and their Links to Statistical Models}}

@phdthesis{mandrick:on-inversion-gr:,
	author = {Mandrick, Sean},
	note = {arXiv:2506.22307 [math.CO]},
	school = {University of Florida},
	title = {On Inversion Graphs of Permutations},
	url = {https://ufdc.ufl.edu/UFE0062191/00001/pdf},
	year = {2025}}

@article{acan:on-random-trees:,
	author = {Acan, H{\"{u}}seyin and Hitczenko, Pawe\l},
	doi = {10.1016/j.disc.2016.05.028},
	fjournal = {Discrete Mathematics},
	issn = {0012-365X},
	journal = {Discrete Math.},
	mrclass = {05C05 (05A05 05C12 05C69 05C90)},
	mrnumber = {3533335},
	number = {12},
	pages = {2871--2883},
	title = {On random trees obtained from permutation graphs},
	volume = {339},
	year = {2016}}

@article{lee:toric-bruhat-in:,
	author = {Lee, Eunjeong and Masuda, Mikiya and Park, Seonjeong},
	doi = {10.1016/j.jcta.2020.105387},
	fjournal = {Journal of Combinatorial Theory. Series A},
	issn = {0097-3165,1096-0899},
	journal = {J. Combin. Theory Ser. A},
	mrclass = {52B05 (14M25)},
	mrnumber = {4190574},
	pages = {Paper No. 105387, 41 pp.},
	title = {Toric {B}ruhat interval polytopes},
	volume = {179},
	year = {2021}}

@article{gervacio:characterizatio:,
	author = {Gervacio, Severino V. and Rapanut, Teofina A. and Ramos, Phoebe Chloe F.},
	doi = {10.4236/ojdm.2013.31007},
	journal = {Open J. Discrete Math.},
	number = {1},
	pages = {33--38},
	title = {Characterization and Construction of Permutation Graphs},
	volume = {3},
	year = {2013}}

@unpublished{mandrick:reflections-in-:,
	author = {Mandrick, Sean and Tenner, Bridget Eileen and Vatter, Vincent},
	note = {In preparation},
	title = {Reflections in graphs}}

@book{golumbic:algorithmic-gra:,
	address = {Amsterdam, The Netherlands},
	author = {Golumbic, Martin Charles},
	edition = {Second},
	isbn = {0-444-51530-5},
	mrclass = {05C17 (05C85)},
	mrnumber = {2063679 (2005e:05061)},
	pages = {xxvi+314},
	publisher = {Elsevier},
	series = {Annals of Discrete Mathematics},
	title = {Algorithmic Graph Theory and Perfect Graphs},
	volume = {57},
	year = {2004}}

@article{dushnik:partially-order:,
	author = {Dushnik, Ben and Miller, Edwin Wilkinson},
	doi = {10.2307/2371374},
	journal = {Amer. J. Math.},
	pages = {600--610},
	title = {Partially ordered sets},
	volume = {63},
	year = {1941}}

@article{donten-bury:complexity-of-t:,
	author = {Donten-Bury, Maria and Escobar, Laura and Portakal, Irem},
	doi = {10.5802/alco.279},
	journal = {Algebr. Comb.},
	number = {3},
	pages = {835--861},
	title = {Complexity of the usual torus action on {K}azhdan--{L}usztig varieties},
	volume = {6},
	year = {2023}}

@article{gao:orbit-structure:fpsac,
	author = {Gao, Yibo and Hodges, Reuven},
	journal = {S\'em. Lothar. Combin.},
	pages = {Art. 105, 12 pp.},
	title = {Orbit structures and complexity in {S}chubert and {R}ichardson varieties (extended abstract)},
	volume = {93B},
	year = {2025}}

@article{brualdi:permutation-gra:,
	author = {Brualdi, Richard A. and Dahl, Geir},
	doi = {10.26493/2590-9770.1536.8db},
	journal = {Art Discrete Appl. Math.},
	number = {3},
	pages = {Paper No. 3.01, 20 pp.},
	title = {Permutation graphs and the weak {B}ruhat order},
	volume = {6},
	year = {2023}}

@article{thibon:the-cycle-enume:,
	author = {Thibon, Jean-Yves},
	doi = {10.1007/s00026-001-8024-6},
	fjournal = {Annals of Combinatorics},
	issn = {0218-0006},
	journal = {Ann. Comb.},
	mrclass = {05A15 (05A05 05E05)},
	mrnumber = {1897638},
	number = {3-4},
	pages = {493--500},
	title = {The cycle enumerator of unimodal permutations},
	volume = {5},
	year = {2001}}

@article{karuppuchamy:on-schubert-var:,
	author = {Karuppuchamy, Paramasamy},
	doi = {10.1080/00927872.2011.635620},
	journal = {Comm. Algebra},
	number = {4},
	pages = {1365--1368},
	title = {On {S}chubert varieties},
	volume = {41},
	year = {2013}}

@article{matsumoto:generateurs-et-:,
	author = {Matsumoto, Hideya},
	journal = {C. R. Acad. Sci. Paris},
	pages = {3419--3422},
	title = {G\'en\'erateurs et relations des groupes de {W}eyl g\'en\'eralis\'es},
	volume = {258},
	year = {1964}}

@article{daly:reduced-decompo:,
	author = {Daly, Daniel},
	doi = {10.1007/s00373-011-1100-8},
	journal = {Graphs Combin.},
	number = {2},
	pages = {173--185},
	title = {Reduced decompositions with one repetition and permutation pattern avoidance},
	volume = {29},
	year = {2013}}

@article{tenner:repetition-in-r:,
	author = {Tenner, Bridget Eileen},
	doi = {10.1016/j.aam.2012.01.002},
	journal = {Adv. in Appl. Math.},
	number = {1},
	pages = {1--14},
	title = {Repetition in reduced decompositions},
	volume = {49},
	year = {2012}}

@incollection{gallai:a-translation-o:,
	address = {Chichester, England},
	author = {Maffray, Fr{\'e}d{\'e}ric and Preissmann, Myriam},
	booktitle = {Perfect Graphs},
	editor = {{Ram{\'\i}rez Alfons{\'\i }n}, Jorge Luis and Reed, Bruce Alan},
	pages = {25--66},
	publisher = {Wiley},
	series = {Wiley Series in Discrete Math. \& Optim.},
	title = {A translation of {G}allai's paper: ``{T}ransitiv orientierbare {G}raphen''},
	volume = {44},
	year = {2001}}

@article{gallai:transitiv-orien:,
	author = {Gallai, Tibor},
	fjournal = {Acta Mathematica Academiae Scientiarum Hungaricae},
	issn = {0001-5954},
	journal = {Acta Math. Acad. Sci. Hungar.},
	mrnumber = {MR0221974 (36 \#5026)},
	number = {1-2},
	pages = {25--66},
	title = {Transitiv orientierbare {G}raphen},
	url = {https://doi.org/10.1007/BF02020961},
	volume = {18},
	year = {1967}}

@article{pnueli:transitive-orie:,
	author = {Pnueli, Amir and Lempel, Abraham and Even, Shimon},
	doi = {10.4153/CJM-1971-016-5},
	fjournal = {Canadian Journal of Mathematics. Journal Canadien de Math\'{e}matiques},
	issn = {0008-414X},
	journal = {Canad. J. Math.},
	mrclass = {05C20},
	mrnumber = {0292717},
	pages = {160--175},
	title = {Transitive orientation of graphs and identification of permutation graphs},
	volume = {23},
	year = {1971}}

@incollection{vatter:permutation-cla:,
	address = {Boca Raton, Florida},
	author = {Vatter, Vincent},
	booktitle = {Handbook of Enumerative Combinatorics},
	editor = {B{\'o}na, Mikl{\'o}s},
	pages = {754--833},
	publisher = {CRC Press},
	title = {Permutation classes},
	year = {2015}}

@incollection{brandstadt:on-the-restrict:,
	address = {Berlin, West Germany},
	author = {Brandst\"adt, Andreas and Kratsch, Dieter},
	booktitle = {Fundamentals of computation theory ({C}ottbus, 1985)},
	doi = {10.1007/BFb0028791},
	isbn = {3-540-15689-5},
	mrclass = {68Q25 (05C75)},
	mrnumber = {821224},
	pages = {53--62},
	publisher = {Springer},
	series = {Lecture Notes in Comput. Sci.},
	title = {On the restriction of some {NP}-complete graph problems to permutation graphs},
	volume = {199},
	year = {1985}}

@article{archer:unimodal-permut:,
	author = {Archer, Kassie and Lauderdale, L.-K.},
	doi = {10.37236/6954},
	fjournal = {Electronic Journal of Combinatorics},
	issn = {1077-8926},
	journal = {Electron. J. Combin.},
	mrclass = {05A05 (05A15 05A19)},
	mrnumber = {3711105},
	number = {3},
	pages = {Paper \#P3.63, 14 pp.},
	title = {Unimodal permutations and almost-increasing cycles},
	volume = {24},
	year = {2017}}

@article{chung:on-unimodal-sub:,
	author = {Chung, F. R. K.},
	doi = {10.1016/0097-3165(80)90021-7},
	issn = {0097-3165},
	journal = {J. Combin. Theory Ser. A},
	mrclass = {05A05},
	mrnumber = {MR600589 (82a:05002)},
	number = {3},
	pages = {267--279},
	title = {On unimodal subsequences},
	volume = {29},
	year = {1980}}

@article{gannon:the-cyclic-stru:,
	author = {Gannon, Terry},
	doi = {10.1016/S0012-365X(00)00368-X},
	fjournal = {Discrete Mathematics},
	issn = {0012-365X},
	journal = {Discrete Math.},
	mrclass = {05A05},
	mrnumber = {1835657},
	number = {1-3},
	pages = {149--161},
	title = {The cyclic structure of unimodal permutations},
	volume = {237},
	year = {2001}}

@article{edelman:on-inversions-a:,
	author = {Edelman, Paul H.},
	doi = {10.1016/S0195-6698(87)80031-8},
	fjournal = {European Journal of Combinatorics},
	issn = {0195-6698,1095-9971},
	journal = {European J. Combin.},
	mrclass = {05A15 (05A05 05A20)},
	mrnumber = {919878},
	number = {3},
	pages = {269--279},
	title = {On inversions and cycles in permutations},
	volume = {8},
	year = {1987}}

@article{petersen:the-depth-of-a-:,
	author = {Petersen, Thomas Kyle and Tenner, Bridget Eileen},
	doi = {10.4310/JOC.2015.v6.n1.a9},
	fjournal = {Journal of Combinatorics},
	issn = {2156-3527},
	journal = {J. Comb.},
	mrclass = {20B30 (05A05 05A15)},
	mrnumber = {3338848},
	number = {1-2},
	pages = {145--178},
	title = {The depth of a permutation},
	volume = {6},
	year = {2015}}

@article{tenner:pattern-avoidan:,
	author = {Tenner, Bridget Eileen},
	doi = {10.1016/j.jcta.2006.10.003},
	fjournal = {Journal of Combinatorial Theory. Series A},
	issn = {0097-3165},
	journal = {J. Combin. Theory Ser. A},
	mrclass = {05E15 (06A07 20F55)},
	mrnumber = {2333139},
	number = {5},
	pages = {888--905},
	title = {Pattern avoidance and the {B}ruhat order},
	volume = {114},
	year = {2007}}

\end{document}